\numberwithin{equation}{subsection} %%% ajoute une numérotation des équations avec les sections %%%
\newtheorem{thm}{Theorem}[section]
\newtheorem{prop}[thm]{Proposition}
\newtheorem{pdef}[thm]{Proposition-Definition}%%% proposition definition%%%
\newtheorem{lem}[thm]{Lemma}
\newtheorem{cor}[thm]{Corollary}
\newtheoremstyle{bidule}% name
{3pt}% Space above
{3pt}% Space below
{}% Body font
{}% Indent amount
{\scshape}% Theorem head font
{.}% Punctuation after theorem head
{.5em}% Space after theorem head
{}% Theorem head spec (can be left empty, meaning `normal')
\newtheorem{df}[thm]{Definition}
\newtheorem{hypo}{Hypothesis}
\theoremstyle{definition}
\newtheorem{rmk}[thm]{Remark}
\newtheorem{ex}[thm]{Example}
\newtheorem*{term}{Terminology}
\newtheorem*{note}{Note}
\newtheorem*{warn}{Warning}
\newtheorem{nota}[thm]{Notation}
\newcommand{\C}{\mathcal{C}}
\newcommand{\Ub}{\mathcal{U}}%%% Ub comme Oublie%%% forgetful functor%%%%%
\newcommand{\F}{\mathcal{F}}
\renewcommand{\O}{\mathcal{O}}
\newcommand{\Ar}{\text{Arr}}
\newcommand{\D}{\mathcal{D}}
\newcommand{\Ba}{\mathcal{B}}
\newcommand{\Q}{\mathbb{Q}}
\newcommand{\Aa}{\mathcal{A}}
\newcommand{\B}{\mathscr{B}}
\newcommand{\M}{\mathscr{M}}
\newcommand{\Ja}{\mathbf{J}} %%acyclic cofibrations%%%
\newcommand{\J}{\mathcal{J}} %% index category for limit and colimits%%%
\newcommand{\T}{\mathcal{T}}
\newcommand{\Ea}{\mathcal{E}} %%%
\newcommand{\W}{\mathscr{W}}
\newcommand{\Nv}{\mathscr{N}}
\newcommand{\I}{\mathbf{I}}
\newcommand{\Un}{\mathbb{I}} %%%% unity in mset
\newcommand{\G}{\mathcal{G}}
\renewcommand{\S}{\mathcal{S}}
\newcommand{\Fb}{\mathbf{F}}%%%%%% Free functor 
\newcommand{\Sim}{\mathscr{S}}
\renewcommand{\to}{\longrightarrow}
\newcommand{\ol}{\overline}
\newcommand{\ul}{\underline}
\renewcommand{\bf}{\mathbf}
\newcommand{\tx}{\text}
\renewcommand{\to}{\longrightarrow}
\DeclareMathOperator\Id{Id}
\DeclareMathOperator\Cat{\mathbf{Cat}}%% Cat
\DeclareMathOperator\colim{\tx{$colim$}}%% colim
\DeclareMathOperator\limi{\tx{$lim$}}%% colim
\DeclareMathOperator\msxsu{\M_{\S}(\tx{$X$})_{\tx{$u$}}}%% Pre-Coseg-Cat
\DeclareMathOperator\kb{\mathbf{K}} % % % 
\DeclareMathOperator\Lb{\tx{L}} % % % localization
\DeclareMathOperator\Ho{\mathbf{ho}} % % % %  homotopy category
\DeclareMathOperator\sset{\mathbf{sSet}} %% simplicial sets
\DeclareMathOperator\av{\alpha_{\downarrow_{\Id_V}}}%% F-infini limit ou colimit
\DeclareMathOperator\coms{\M_\Ub[\ag]} 
\DeclareMathOperator\comsec{\M_\Ub[\ag]_{\tx{$e$}}^{\mathbf{c}}}
\DeclareMathOperator\comse{\M_\Ub[\ag]_{\tx{$e$}}}  
\DeclareMathOperator\comsep{\M_\Ub[\ag]_{\tx{$e+$}}}
\DeclareMathOperator\comsepc{\M_\Ub[\ag]_{\tx{$e+$}}^{\mathbf{c}}}  
\DeclareMathOperator\com{\ag} 
\DeclareMathOperator\oalg{\O\tx{-$Alg$}}%%% 
\DeclareMathOperator\ag{\textgoth{A}}
\DeclareMathOperator\mua{\M_\Ub[\ag]}
\title{Quillen-Segal objects and structures:
an overview} 
\author{Hugo V. Bacard \thanks{\textit{E-mail address}: \href{mailto:hbacard@uwo.ca}{hbacard@uwo.ca}
}}
 \affil{Western University}
\date{June 29, 2014}
\begin{document}
\maketitle
\begin{abstract}
Let  $\M$  be a combinatorial and left proper model category, possibly with a monoidal structure.  If $\O$ is either a monad on $\M$, or an operad enriched over $\M$, define a \emph{QS-algebra} in $\M$ to be a  weak equivalence $\F: s(\F) \xrightarrow{\sim}t(\F)$ such that the target $t(\F)$ is an $\O$-algebra in the usual sense.  A classical $\O$-algebra is a QS-algebra supported by an isomorphism $\F$. A \emph{QS-structure} $\F$ is also a weak equivalence such that $t(\F)$ has a \emph{structure}, e.g \emph{Hodge, twistorial, schematic, sheaf}, etc.  We build a homotopy theory of these objects and compare it with that of usual $\O$-algebras/structures.  Our results rely on Smith's theorem on left Bousfield localization for combinatorial and left proper model categories. These ideas are derived from the theory of co-Segal algebras and categories.
\end{abstract}
\setcounter{tocdepth}{1}
\tableofcontents
%\newpage

\section{Introduction}
This paper is originally motivated by the desire to have Quillen model structure on weak algebras for a given monad or an operad $\O$. And we wanted to do this under minimal hypotheses on the ground model category $\M$. But the big picture that comes out is actually related to classical results of Quillen \cite{Quillen_HK}, Thomason \cite{Thomason_cat}, Jardine \cite{Jardine_simpre}, Joyal \cite{Joyal_simpsh} and many others.

 The idea is to start with a functor  $\Ub: \ag \to \M$ such as a forgetful functor $\Ub:  \oalg(\M) \to \M$ from algebras to underlying objects; or a forgetful functor  $$\Ub:C(\Q MHS) \to C(\Q)$$ from complexes of rational Mixed Hodge Structures to complexes of rational vector spaces.

Other relevant examples include the Grothendieck-Quillen-Thomason example of the  \emph{nerve functor} $\Cat \to \sset$, from categories to simplicial sets; or a (higher) \emph{fiber functor} in Tannakian  settings; or an inclusion of $Sh(\C) \hookrightarrow sPresh(\C)$ of (pre)sheaves to simplicial presheaves on a Grothendieck site.\\

Given such $\Ub$, we can form the \emph{comma category} $(\M\downarrow\Ub)$ which we described hereafter for the reader's convenience. The objects of $(\M\downarrow\Ub)$ are morphisms $\F: a \to \Ub(b)$. Given another object $\G: a' \to \Ub(b')$, a morphism $\sigma:\F \to \G$ is given by a morphism $i: a \to a'$ in $\M$ and a morphism $j: b \to b'$ in $\ag$ such that we have a commutative square in $\M$:
\[
\xy
(0,18)*+{a}="W";
(0,0)*+{\Ub(b)}="X";
(30,0)*+{\Ub(b')}="Y";
(30,18)*+{a'}="E";
{\ar@{->}^-{\Ub(j)}"X";"Y"};
{\ar@{->}^-{\F}"W";"X"};
{\ar@{->}^-{i}"W";"E"};
{\ar@{->}^-{\G}"E";"Y"};
\endxy
\]

There is a forgetful functor $|-|: (\M\downarrow \Ub) \to \ag$ that corresponds to the target functor;  and  we also have a source functor $s: (\M\downarrow \Ub) \to \M$. We are actually interested in the full subcategory  $(\W \downarrow \Ub) \subseteq (\M\downarrow \Ub)$, where $\W \subseteq \M$ is the subcategory of weak equivalences in $\M$. \\

Our \emph{QS-objects} (Quillen-Segal objects), are precisely the objects of $(\W \downarrow \Ub)$ consisting of weak equivalences $\F: a \xrightarrow{\sim} \Ub(b)$. The goal of the paper is to have a model structure on $(\M\downarrow \Ub)$ such that fibrant objects belongs to $(\W \downarrow \Ub)$. We also want to analyze the functor $(\M \downarrow \Ub) \to \ag$, when there is a model structure on $\ag$.\\

To achieve this, we use some classical techniques that go back to works of Jardine \cite{Jardine_simpre} and Joyal \cite{Joyal_simpsh}. A classical result  of  Bousfield-Friedlander \cite{Bous_Fried} also provides  some criterions to get a model structure in this setting (see also \cite{Stanculescu_BF}).   Simpson \cite{Simpson_HTHC} later refers to this process as a  \emph{weak (monadic) projection}.  It starts with a key observation that  there is an endofunctor $\Sim: (\M\downarrow \Ub) \to (\M\downarrow \Ub)$ whose image is in $(\W\downarrow \Ub)$.  Indeed, given $\F: a \to \Ub(b)$, we can use the axiom of the model category $\M$ and factor $\F$ as a cofibration followed by a trivial fibration:
$$a \xrightarrow{\F} \Ub(b)= a \hookrightarrow m \xtwoheadrightarrow[\sim]{\Sim(\F)} \Ub(b).$$

This factorization can be chosen to be functorial and even \emph{algebraic} if $\M$ is an \emph{algebraic model structure} in the sense of Riehl \cite{Riehl_algcat}. After this, one defines a map $\sigma: \F \to \G$ to be a weak equivalence in $(\M\downarrow \Ub)$ if the component $m \to m'$ in $\Sim(\sigma)$ is a weak equivalence in $\M$. And by $3$-for-$2$ it also forces the component $\Ub(b) \to \Ub(b')$ to be a weak equivalence. Put differently, this simply means that $\Sim(\sigma)$ is a commutative square of weak equivalences.\\

To get a model structure,  we need a nice category $\ag$, typically a locally presentable category. We also need a left adjoint   $\Fb: \M \to \ag$ to the functor $\Ub$; and we assume that $\Ub$ is faithful(=injective on morphisms). The ground model  category $\M$ is required to be combinatorial and left proper.  Our main result is the content of  Theorem \ref{main-thm-quillen-equiv}.\\

These hypotheses are enough to build a homotopy theory on our \emph{QS-objects}. For example in the case of algebras, we don't require further axioms such as the \emph{monoid axiom} of \cite{Sch-Sh-Algebra-module}. It also extends the work done in \cite{Bacard_dgcom}, about commutative dg-algebras in characteristic $p>0$. If we let $\ag$ be $\oalg(\M)$  with the free-forgetful adjunction  $\Ub: \oalg(\M) \leftrightarrows \M: \Fb$, the identity morphisms of algebras give an  embedding $$\iota: \oalg(\M) \hookrightarrow (\M\downarrow \Ub),$$
whose left adjoint is precisely the target functor:
$$|-|: (\M\downarrow \Ub) \to \oalg(\M).$$ 

If the projective (usual) model structure on $\oalg(\M)$ exists we will show that the adjunction $|-| \dashv \iota$ is a Quillen equivalence.\\

In general, if $\Ub: \ag \to \M$ reflects the weak equivalences i.e., $j$ is a weak equivalence in $\ag$ as soon as $\Ub(j)$ is a weak equivalence in $\M$, then the functor $|-|: (\M\downarrow \Ub) \to \ag$ sends weak equivalences to weak equivalences. And in particular it descends to the respective Gabriel-Zisman localizations (see \cite{Gabriel_Zisman}). If $\Ub$ creates the weak equivalences in $\ag$ then the inclusion $\iota: \ag \to (\M\downarrow \Ub)$  will preserve and reflect the weak equivalences. In particular we also have an induced functor between the respective homotopy categories.\\

 In this situation even if $\ag$ is not a model category, it can be shown under mild conditions,  that the inclusion $\iota: \ag \to (\M\downarrow \Ub)$ induces an equivalence between the respective homotopy categories.
\subsection*{Related works} 
Model structures on comma categories have been discussed  by Stanculescu \cite[Sec. 6.5]{Stanculescu_multi}, but the weak equivalence he considered are the natural weak equivalences, which turn out to be too strong for our purposes.

The general philosophy of this paper is also close to that of Hollander \cite{Hollander_stack}, Hirschowitz-Simpson \cite{HS}, Jardine \cite{Jardine_simpresh}, Joyal-Tierney \cite{Joy-Tier},  Stanculescu \cite{Stanculescu_stack} and many others.\\

We can apply our main result (Theorem \ref{main-thm-quillen-equiv}) when  $\ag= \M$ and  $\Ub= \Id_{\M}$ is the identity functor $\Id: \M \to \M$. In this case $ (\M\downarrow \Ub)$ is the category $\Ar(\M)$ of morphisms in $\M$ and $|-|: \Ar(\M) \to \M$ is the target functor. Our theorem gives a Quillen equivalence 
$$|-|: \Ar(\M) \leftrightarrows \M: \iota.$$

If we apply our result to the nerve functor $\Nv: \Cat \to \sset$ twisted by $Ex_2$ (see \cite{Thomason_cat}), and its left adjoint, we get a Quillen equivalence $$|-|: (\sset \downarrow \Cat)  \leftrightarrows \Cat: \iota,$$
where in $\Cat$ we use the Thomason model structure. This gives also a model for the homotopy theory of spaces\footnote{There are some bound issue here, in order to get a combinatorial model structure. These have been taken care of in \cite{Low_heart}} ($\sset$). If we don't use $Ex_2$, we no longer have Thomason's model structure on $\Cat$ but we do have a model structure on $(\sset \downarrow \Cat)$ with the same weak equivalences, so we still have the same homotopy categories.  \\

There are many cases where $\ag$ has a model structure that is right-induced through an adjunction $\Ub: \ag \leftrightarrows \M: \Fb$. Sometimes, it happens that the model category $\ag$ is not left proper even if $\M$ is. By a result of Dugger \cite{Dugger, Dugger_rep}, we know  that if $\ag$ is a combinatorial model category, then it's Quillen equivalent to another one that is left proper. In our case the category $(\M\downarrow \Ub)$ is an example of such \emph{Dugger model} for $\ag$. 

In fact our method is close to that of Dugger. Basically we truncate $\Delta^{op}$ by only keeping the the interval $[0 \to 1]$, which is the opposite of the unique morphism $1 \to 0$ in $\Delta$. This idea is motivated by the theory of co-Segal algebras \cite{Bacard_dgcom}.

We have a similar observation for a theorem of  Rezk \cite{Rezk_model} on algebras of a simplicial algebraic theory $\T$.  But our theorem is not as satisfactory as   Rezk's since  the model structure we get here for weak $\T$-algebras is a priori only left proper and not necessarily right proper. But one advantage is probably that we don't change the algebraic theory $\T$.

It seems that the same thing happens with  Bergner's rigidification theorem \cite{Bergner_rigid},  which is based on a earlier result of Badzioch \cite{Badzioch_rigid}. In fact our theorem in Bergner's setting could be seen as a \emph{co-Segal} analogue.  This analogy is justified by the fact that, based on this rigidification result, Bergner \cite{Bergner_inf} shows that Segal categories are Quillen equivalent to simplicial categories. \\

We also include a short observation on the left properness of the category of algebras over an operad or a monad (see Proposition \ref{prop-left-prop} and Corollary \ref{left-prop-alg}).\\

The present material can be extended to situations where $\Ub: \com \to \M$ is a monoidal functor. As pointed out by Hovey \cite{Hov-model} (after Smith),  we should put a monoidal structure  on $(\M\downarrow \Ub)$ with the \emph{pushout product}. The pushout product doesn't affect the targets, therefore we will still have a monoidal functor 
$$(\M\downarrow \Ub) \to \com.$$

\begin{note}
This paper aims to expose first the general idea, and we apologize in advance because most of the results are given without proof. Another more elaborated version of this paper will contain much more details as well as some applications.
\end{note}

\begin{nota}
For simplicity, we write in the main text $\mua$ instead of  $(\M \downarrow \Ub)$. We think of this category as a category of objects of $\M$ with \emph{coefficients in $\ag$} (rel. to  $\Ub$). 
\end{nota}
  
\section{Definition and Properties}
 
\begin{hypo}\label{hypo-m}
In this paper we assume that
\begin{enumerate}
\item $\M$ is either a symmetric monoidal closed category $\M=(\ul{M}, \otimes,I)$,   such that the underlying  category $\ul{M}$ is  a combinatorial and left proper model category; 
\item or simply a combinatorial and left proper model category that is not necessarily part of a monoidal structure $\otimes$.
\item In particular $\M$ is locally presentable.
\item We  consider a faithful functor $\Ub: \com \to \M$ that is also a right adjoint between locally presentable categories. 
\end{enumerate} 
\end{hypo}

\begin{warn}
For simplicity we will freely identity $\M$ and $\ul{M}$. 
\end{warn}

Let $\O$ be either a monad on $\M$ or an operad enriched over $\M$.
\begin{df}
An $\O$-QS-algebra in $\M$ is a weak equivalence $\F: s(\F) \xrightarrow{\sim} t(\F)$ such that the target $t(\F)$ is a usual $\O$-algebra.\\

Let $\G: s(\G) \to t(\G)$ be another $\O$-QS-algebra. A map $\sigma: \F \to \G$ of QS-algebras is a map in the arrow-category i.e., a commutative square such that the component $t(\F) \to t(\G)$ is a map of $\O$-algebras.
\end{df}
\begin{rmk}
This notion is inspired by the theory of co-Segal algebras. In the homotopy theory of co-Segal algebras, every co-Segal algebra is equivalent to a QS-algebra. Everything that will be said here can be generalized to categories.
\end{rmk}

\begin{df}
Let $\ag \subset \M$ be a subcategory of objects with a \emph{structure}. An \emph{$\ag$-QS-object} is a weak equivalence $\F: s(\F) \xrightarrow{\sim} t(\F)$ such that $t(\F) \in \ag$.  A morphism $\sigma: \F \to \G$ of  $\ag$-QS-objects is a commutative square such that the component $t(\F) \to t(\G)$ is a morphism in $\ag$. 
\end{df}

\begin{rmk}
Following Grothendieck's spirit, we consider a more general situation where we replace the inclusion $\ag \subset \M$ by a functor $\Ub: \ag \to \M$ that is injective on morphisms (faithful functor).  We're going to generalize the previous definitions to this setting since both concepts fit in this formalism. Indeed, we recover the case of QS-algebras by letting $\ag$ be the category $\oalg$, of $\O$-algebras in $\M$ with the forgetful functor $\Ub: \oalg \to \M$.
\end{rmk}
Before going further, let's introduce some notation that will simplify the discussion. 

\begin{nota}
\begin{enumerate}
\item Let's denote by $\Un= \{0 \to 1\}$ the \emph{walking-morphism category}. As usual a morphism $\F$ in $\M$ is the same thing as a functor $\F:\Un \to \M$.
\item We will therefore denote by $\M^{\Un}=\Ar(\M)$, the category of morphisms of $\M$. And from now on, we will write $\F: \F_0 \to \F_1$.
\item We will denote by $s: \M^{\Un} \to \M$ and $t : \M^{\Un} \to \M$ the usual source and target functors. They correspond respectively to the evaluation at $0$ and $1$.
\item Let $\iota_\M:\M \to \M^{\Un}$ be the natural embedding that takes an object $m$ to $\Id_{m}$; it takes a morphism $f: m \to m'$ to the morphism $[f]: \Id_m \to \Id_{m'}$ of $\M^{\Un}$, whose components are both equal to $f$.
\item It's easily seen that $t: \M^{\Un} \to \M$ is left adjoint to $\iota_\M$. In fact, $t$ is the colimit functor when we regard a morphism as a diagram $\Un \to \M$.
\end{enumerate}
\end{nota}

To build a homotopy theory of our objects,  we proceed exactly as for Segal categories, co-Segal categories, quasicategories, Segal spaces, etc. We enlarge our definition from QS-algebras to prealgebras. A QS-algebra (resp. QS-structure) is a prealgebra (prestructure) that satisfies the analogue of the \emph{co-Segal conditions} (see \cite{Bacard_twec,Bacard_dgcom}). This idea of enlarging our class of objects goes back to Jardine \cite{Jardine_simpre} and Joyal \cite{Joyal_simpsh}, and has been widely used in the literature (see for example \cite{Simpson_HTHC})\begin{df}
Let $\M$ be as above and let $\Ub: \ag \to \M$ be a faithful functor that reflects isomorphisms. 
\begin{enumerate}
\item An \emph{$\Ub$-preobject} in $\M$ is a morphism $\F: \F_0 \to \F_1 \in \M^{[\Un]}$ such that $\F_1$ is in the image of $\Ub$.
\item An $\Ub$-preobject is an \emph{$\Ub$-QS-object} if $\F$ is a weak equivalence in $\M$. 
\item A map $\sigma: \F \to \G$ of $\Ub$-preobjects is a map in $\M^{\Un}$ such that the component $\sigma_1: \F_1 \to \G_1$ is a map in the image of $\Ub$.
\end{enumerate}
Denote by $\mua$ the category of $\Ub$-preobjects and morphisms between them. There is a forgetful functor 
$$\Pi: \mua \to \M^{\Un}.$$
Taking the source and target gives two functors:
$$s: \mua \to \M, \quad t: \mua \to \M.$$ 
The functor $t$ factors through an obvious functor $|-|: \mua \to \ag$ and we have a commutative triangle:
\begin{equation}\label{triang1}
\xy
(0,15)*+{\mua}="W";
(0,0)*+{\ag }="X";
(20,0)*+{\M}="Y";
{\ar@{->}^-{\Ub}"X";"Y"};
{\ar@{->}^-{|-|}"W";"X"};
{\ar@{->}^-{t}"W";"Y"};
\endxy
\end{equation}
The composite $\ag \xrightarrow{\Ub} \M \xrightarrow{ \iota_\M} \M^{\Un}$ extends to a functor $\iota: \ag \hookrightarrow \mua$ that identifies $\ag$ as a full subcategory of $\mua$. We also have a commutative triangle:
\begin{equation}\label{triang2}
\xy
(0,15)*+{\mua}="W";
(0,0)*+{\ag }="X";
(20,0)*+{\M}="Y";
{\ar@{->}^-{U}"X";"Y"};
{\ar@{<-}^-{\iota}"W";"X"};
{\ar@{->}^-{s}"W";"Y"};
\endxy
\end{equation}
\end{df}

\begin{term}
By analogy, we shall call the functor $|-|: \mua \to \ag$ an \emph{augmentation} or an \emph{anchor}.
\end{term}

\begin{rmk}
\begin{enumerate}
\item Thanks to the axioms of the model category $\M$, there is an endofunctor $\Sim: \mua \to \mua$ that takes an $\Ub$-preobject $\F$ to an $\Ub$-QS-object $\Sim(\F)$. Indeed it suffices to factor $\F$ as a cofibration followed by a trivial fibration:
$$\F_0 \xrightarrow{\F} \F_1= \F_0 \hookrightarrow m \xtwoheadrightarrow[\sim]{\Sim(\F)} \F_1.$$
\item As we see this operation doesn't change the target $\F_1$ that possesses the \emph{structure}. This factorization provides a tautological map $\eta: \F \to \Sim(\F)$ in $\mua$. We will show later that this map is a weak equivalence in a suitable model structure (Bousfield localization). 
\item It's also a key observation that if we regard $\F_1$ as an object of $\mua$ through the inclusion $\iota: \ag \to \mua$, then $\F$ is simultaneously an object of $\mua$ and a morphism $\F \to \iota(\F_1)$ in $\mua$. Similarly $\Sim(\F)$ is simultaneously an object and a morphism $\Sim(\F) \to \iota(\F_1)$ which is a level-wise weak equivalence in $\M$.
\item In the end we will show later that the morphism $\F \to \iota(\F_1) $ is a weak equivalence in a suitable model structure.
\end{enumerate}
\end{rmk}

Our fist step toward the homotopy theory is the following definition.
\begin{df}\label{df-easy-weak}
Say that a map $\sigma: \F \to \G$ in $\mua$ is an \emph{easy weak equivalence} (resp. fibration) if the image $s(\sigma): \F_0 \to \G_0$ is a weak equivalence (resp. fibration) in $\M$. 

\end{df}

Our first result is the following theorem whose proof will be given at the end of the next subsection. Basically it simply says that, under some hypotheses on $\Ub$ and $\ag$, there is a \emph{right induced} model category on $\mua$ using the source functor (and it's left adjoint).
\begin{thm}\label{easy-model-mua}
Let  $\M$ be a combinatorial and left proper model category.  Let $\Ub: \ag \to \M$ be a faithful functor that reflects isomorphisms.  Assume also that $\Ub$ has a left adjoint  $\Fb: \M \to \ag$ and that $\ag$ is locally presentable. 
\begin{enumerate}
\item Then there is a combinatorial model structure on $\mua$ which may be described as follows.
\begin{itemize}[label=$-$]
\item A map $\sigma :\F \to \G$ is a weak equivalence (resp. fibration) if it's an easy weak equivalence (resp. fibration). %if $$s(\sigma): \F_0 \to \G_0,$$ is a weak equivalence (resp. fibration) in $\M$.
\item A map  $\sigma :\F \to \G$ is a cofibration if it has the LLP against any map that is a weak equivalence and a fibration. 
\end{itemize}
\item This model structure is also left proper.
%\item The set $\Gamma\Fb^{\1}(\I)$ is a set of generating cofibrations and the set $\Gamma\Fb^{\1}(\Ja)$ is a set of generating  trivial cofibrations.

\item We will denote by $\mua_e$ this model category. The  source functor
$$ s: \mua_e \to  \M$$
is a right Quillen functor.
\item If there is a model structure on $\ag$ such that $\Ub: \ag \to \M$ is right Quillen,  then the functor $|-|: \mua_e \to \ag$ is a left Quillen functor.
\end{enumerate} 
\end{thm}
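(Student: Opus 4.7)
My plan is to obtain the model structure on $\mua_e$ by right-transfer from $\M$ along the source functor $s$. First I construct a left adjoint $L_s : \M \to \mua$ by setting $L_s(m) = (m \xrightarrow{\eta_m} \Ub\Fb(m))$, the unit of the adjunction $\Fb \dashv \Ub$; the natural bijection $\Hom_{\mua}(L_s(m),\F) \cong \Hom_{\M}(m, \F_0)$ falls out by unpacking commutative squares. Since $\M$ and $\ag$ are locally presentable and $\Ub$ is accessible, the comma category $\mua$ is itself locally presentable, and colimits in $\mua$ are computed component-wise on sources in $\M$; in fact $s$ also admits a right adjoint $R_s(m) = (m \to \Ub(1_{\ag}))$ (using that the right adjoint $\Ub$ preserves terminal objects), so $s$ preserves all small colimits.

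With $L_s$ in hand, I invoke the transfer theorem of Kan, in its combinatorial form, to lift the model structure from $\M$ to $\mua$. The only non-formal hypothesis is acyclicity of relative $L_s(J)$-cell complexes: given a generating trivial cofibration $j: A \to B$ of $\M$, any pushout of $L_s(j)$ in $\mua$ has source-component the pushout of $j$ in $\M$, which is a trivial cofibration; the argument for transfinite compositions is identical since $s$ commutes with colimits. So every $L_s(J)$-cell has source a weak equivalence in $\M$, which discharges Kan's hypothesis and yields a combinatorial model structure on $\mua$ with generating sets $L_s(I)$ and $L_s(J)$, proving (1). Item (3) is then built in, since $L_s \dashv s$ is by construction a Quillen adjunction.

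For (2), the same colimit computation shows that every cofibration in $\mua_e$ has source-component a cofibration of $\M$: this holds for the generators $L_s(I)$, and the class is closed under pushout, transfinite composition, and retracts, all of which $s$ preserves. If $\sigma : \F \to \G$ is an easy weak equivalence and $\tau : \F \to \H$ a cofibration in $\mua_e$, the source-component of the pushout $\G \to \G \sqcup_{\F} \H$ is the pushout in $\M$ of the weak equivalence $s(\sigma)$ along the cofibration $s(\tau)$, hence a weak equivalence by left properness of $\M$; thus $\mua_e$ is left proper.

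For (4), I identify a right adjoint $\iota : \ag \to \mua$ to $|-|$ by $b \mapsto \Id_{\Ub(b)}$; the bijection $\Hom_{\mua}(\F, \iota(b)) \cong \Hom_{\ag}(\F_1, b)$ is immediate, since the source-component of such a square is forced once the target-component is fixed. If $\Ub$ is right Quillen, then for any (trivial) fibration $j$ in $\ag$ the map $\iota(j)$ has source-component $\Ub(j)$, which is a (trivial) fibration in $\M$, so $\iota(j)$ is a (trivial) fibration in $\mua_e$; thus $\iota$ is right Quillen and $|-|$ is left Quillen. The principal obstacle is the acyclicity verification underlying the transfer; once that is secured, everything else reduces to the fact that $s$ respects colimits and that source-components of cofibrations of $\mua_e$ coincide with cofibrations of $\M$.
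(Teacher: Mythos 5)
Your proposal is correct and follows essentially the same route as the paper: right-transfer along the source functor $s$ via its left adjoint $m \mapsto (m \xrightarrow{\eta_m} \Ub\Fb(m))$ (the paper's $\Fb_s$), with acyclicity and left properness both reduced to the fact that $s$ preserves colimits, so that pushouts of the transferred generating (trivial) cofibrations are computed on source-components in $\M$ (the paper's Lemma \ref{lem-pushout-important} combined with \cite[Theorem 11.3.2]{Hirsch-model-loc}). Your explicit verifications of items (3) and (4) — in particular that $\iota(j)$ has source-component $\Ub(j)$, so $\iota$ is right Quillen whenever $\Ub$ is — supply details the paper leaves implicit.
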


This theorem is not exactly what we want. Having a combinatorial and left proper model category allows us to perform the machinery of left Bousfield localization, using Smith's theorem (see \cite{Barwick_localization}). 

\begin{ex}
Examples of such situation include the category of complexes in a Grothendieck abelian category. Another important example is the category of complexes of rational mixed Hodge structures together with the forgetful functor that lands in the category $C(\Q)$ of complexes of rational vector spaces. It seems that the work of Cirici-Guillén \cite{Cirici_Gui} can be performed using Quillen model categories.
\end{ex}

\subsection{Limits, colimits and adjunction}

\subsubsection{Adjunctions}
The following result follows from a direct checking. 
\begin{prop}
Assume that $\Ub: \ag \to \M$ is faithful functor that possesses a left adjoint $\Fb: \M \to \ag$. Then we have an adjunction:
 $$|-|: \mua \leftrightarrows  \ag: \iota,$$ where $|-|$ is a left adjoint and $\iota$ is a right adjoint. In other words, the functor $\iota: \ag \hookrightarrow \mua$ exhibits $\ag$ as a full reflective subcategory.  $\qed$
\end{prop}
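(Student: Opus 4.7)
The plan is to exhibit the adjunction isomorphism directly by unpacking definitions, since $|-|$ and $\iota$ are manifestly opposite operations on the comma structure. Fix an object $\F : \F_0 \to \Ub(b)$ of $\mua$ (so $|-|(\F) = b$) and any $c \in \ag$. By definition $\iota(c) = \Id_{\Ub(c)}$, and a morphism $\sigma : \F \to \iota(c)$ in $\mua$ is a commutative square in $\M$ whose components are $i : \F_0 \to \Ub(c)$ and $\sigma_1 : \Ub(b) \to \Ub(c)$, with $\sigma_1$ lying in the image of $\Ub$. Faithfulness of $\Ub$ then forces $\sigma_1 = \Ub(j)$ for a \emph{unique} $j : b \to c$ in $\ag$, and commutativity of the square against the identity on the right forces $i = \Ub(j) \circ \F$. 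Hence $\sigma$ is completely determined by $j$.

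The assignment $\sigma \mapsto j$ therefore defines a bijection $\Hom_{\mua}(\F, \iota(c)) \xrightarrow{\cong} \Hom_{\ag}(|-|(\F), c)$, with inverse $j \mapsto (\Ub(j) \circ \F, \Ub(j))$. Naturality in $c$ is immediate from functoriality of $\iota$: post-composition by $\iota(j'')$ for $j'' : c \to c'$ matches post-composition by $j''$ in $\ag$. Naturality in $\F$ comes from the definition of $|-|$ on morphisms: any $\tau : \F' \to \F$ has target component of the form $\Ub(j')$ for a unique $j' = |-|(\tau)$, and precomposition by $\tau$ on the left exactly corresponds to precomposition by $j'$ on the right. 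Equivalently, one can write down the unit $\eta_{\F} : \F \to \iota(|-|(\F))$ as the square with left component $\F$ and right component $\Id_{\Ub(b)}$, together with the counit $\epsilon_c = \Id_c$ (since $|-|(\iota(c)) = c$ on the nose), and verify the two triangle identities by inspection.

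Since the counit is in fact the identity, $\iota$ is fully faithful, exhibiting $\ag$ as a full reflective subcategory of $\mua$. I do not anticipate any real obstacle: the only hypothesis actually used is faithfulness of $\Ub$, which ensures that the $\ag$-component $j$ of a morphism in $\mua$ is uniquely recoverable from its $\M$-component $\Ub(j)$, so that the assignment $|-|$ is a well-defined functor. The existence of the left adjoint $\Fb$ plays no role in this statement; it will be used only later, in the model-categorical constructions.
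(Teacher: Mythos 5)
Your proof is correct and is exactly the ``direct checking'' the paper leaves to the reader: the paper states this proposition with no written proof, and your unit/counit description (with identity counit, hence full reflectivity) is the intended verification. Your side remark that the left adjoint $\Fb$ is never actually used in establishing $|-|\dashv\iota$ is also accurate.
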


\begin{lem}
Let $\M$ be as before and let $\Ub: \ag \to \M$ be a faithful functor that has a left adjoint $\Fb: \M \to \ag$.
\begin{enumerate}
\item The functor $\Pi: \mua \to \M^{\Un}$ has a left adjoint $\Gamma: \M^{\Un} \to \mua$. 
\item The source functor $s: \mua \to \M$ has a left adjoint $\Fb_s: \M \to \mua$. And $s$ preserves colimits and in particular it preserves pushouts
\item If $\ag$ has a terminal object, then $s$ has also a right adjoint $taut_s: \M \to \mua$. In particular $s$ also preserves limits.
%\item  limits and colimits, and therefore it preserves pushouts.
\item There is a commutative triangle of left adjoints, and the triangle \eqref{triang2} is formed by the respective right adjoints:

\begin{equation}\label{triang3}
\xy
(0,15)*+{\mua}="W";
(0,0)*+{\ag }="X";
(20,0)*+{\M}="Y";
{\ar@{<-}^-{\Fb}"X";"Y"};
{\ar@{->}_-{|-|}"W";"X"};
{\ar@{<-}^-{\Fb_s}"W";"Y"};
\endxy
\end{equation}
\end{enumerate}
\end{lem}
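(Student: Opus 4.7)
The plan is to construct each adjoint explicitly by transporting along the unit $\eta: \Id \to \Ub\Fb$ of $\Fb \dashv \Ub$, and then verify the universal properties directly. For (1), I would define $\Gamma(g: a \to c) := \bigl(a \xrightarrow{g} c \xrightarrow{\eta_c} \Ub\Fb(c)\bigr)$, regarded as an object of $\mua$ whose lift in $\ag$ is $\Fb(c)$. A morphism $\Gamma(g) \to \F$ in $\mua$ is a pair $(i: a \to \F_0, \; j: \Fb(c) \to |\F|)$ making the evident square commute; transposing $j$ under $\Fb \dashv \Ub$ to $j^{\flat}: c \to \Ub(|\F|) = \F_1$ converts this datum bijectively into a morphism $g \to \Pi\F$ in $\M^{\Un}$, with the two commutativity conditions matching exactly. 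Naturality is routine.

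For (2), I would first observe that $\iota_\M \dashv s_{\M^{\Un}}$ on the arrow category: a map $\Id_a \to (c \xrightarrow{f} d)$ forces its target component to be $f \circ i$, so it is determined by $i: a \to c$. Since $s = s_{\M^{\Un}} \circ \Pi$, the composite $\Fb_s := \Gamma \circ \iota_\M$ is left adjoint to $s$, with explicit formula $\Fb_s(a) = (a \xrightarrow{\eta_a} \Ub\Fb(a))$ and lift $\Fb(a)$. For the preservation of colimits by $s$, I would invoke the standard componentwise description of colimits in a comma category (valid since $\M$ and $\ag$ are cocomplete): the colimit of a small diagram $D: J \to \mua$ has source $\colim_J s(D(j))$ in $\M$ and lift $\colim_J |D(j)|$ in $\ag$, with structure arrow induced universally; hence $s$ commutes with all small colimits, and pushouts in particular are preserved.

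For (3), since $\Ub$ is a right adjoint (Hypothesis \ref{hypo-m}), it preserves the terminal, so $\Ub(1_\ag)$ is terminal in $\M$. I would set $taut_s(a) := (a \to \Ub(1_\ag))$ with lift $1_\ag$, and observe that any map $\F \to taut_s(a)$ has a forced (unique) lift component and an automatically commuting square, yielding a natural bijection with $\Hom_\M(s(\F), a)$; this gives $s \dashv taut_s$, whence $s$ also preserves limits. Finally, for (4), the right-adjoint triangle \eqref{triang2} commutes by $s(\iota(b)) = s(\Id_{\Ub(b)}) = \Ub(b)$, and the left-adjoint triangle \eqref{triang3} then commutes either by uniqueness of adjoints or directly from $|\Fb_s(a)| = \Fb(a)$. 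The only mildly delicate point in the whole argument is the colimit preservation in (2): one must note that only the source component of a colimit in $\mua$ is involved, and this is computed purely in $\M$ independently of how $\Ub$ interacts with colimits.
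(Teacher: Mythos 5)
Your construction of each adjoint is exactly the paper's: $\Gamma(g)$ is the postcomposition with the unit $\eta$, $\Fb_s = \Gamma\circ\iota_\M$, and $taut_s$ sends $a$ to the unique map to the (image of the) terminal object; the proposal is correct and follows the same route. One small remark: your justification that $s$ preserves colimits (via the componentwise description of colimits in the comma category, where the source of a colimit is computed purely in $\M$) is actually the cleaner argument --- the paper's one-line appeal to ``left adjoints preserve colimits'' only shows $\Fb_s$ preserves colimits, and the preservation by $s$ is really supplied by its later Lemma \ref{lem-limit-colimit}, which is precisely the fact you invoke.
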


\begin{proof}
Let $\eta: m \to \Ub\Fb(m)$ be the unit of the adjunction $\Fb \dashv \Ub$. Given $f: f_0 \to f_1$  in $\M^{\Un}$, define $\Gamma(f)$ as the composite
$$f_0 \to f_1 \xrightarrow{\eta} \Ub\Fb(f_1).$$

It's easily seen that this defines a left adjoint to $\Pi: \mua \to \M^{\Un}$. This proves the first assertion.\\

The left adjoint $\Fb_s$ is just the composite of left adjoints:
$$\M \xrightarrow{\iota_\M} \M^{\Un} \xrightarrow{\Gamma} \mua.$$
Explicitly, $\Fb_s(m)$ is just the unit $\eta_m: m \to \Ub\Fb(m)$.
This gives the second assertion since left adjoint always preserves colimits.

For assertion $(3)$ we proceed as follows. Let $\ast$ be the terminal object in $\M$.  Since $\Ub$ is a right adjoint, it must send terminal object to terminal object. Given $m \in \M$ define $taut_s(m) \in \mua$ to be the unique morphism $m \to \ast$ going to the terminal object. Clearly this defines a right adjoint to $s$ as the reader will check. Finally right adjoint always preserves limits.

The last assertion is clear.
\end{proof}

\begin{rmk}\label{rmk-adj-gamma}
It's important to notice that the left adjoint $\Gamma: \M^{\Un} \to \mua$ doesn't  change the source. In other word, given $f: f_0 \to f_1$ we have $\Gamma(f)_0=f_0$.  And if $\chi: f \to g$ is a morphism, then $\Gamma(\chi)_0= \chi_0$.
\end{rmk}

\subsubsection{Limits and Colimits}
\begin{note}
We list below some technical results on locally presentable categories. Good references on the subject include \cite{Adamek-Rosicky-loc-pres}, \cite{Chorny_Rosi}, \cite{Low_heart}. The last reference is very useful for our discussion because the author deeply treats in \cite{Low_heart}, the size issue for combinatorial model categories. 

\end{note}
Given a diagram $D: \J \to \mua$, denote by $|D|: \J \to \ag$ the projection in $\ag$ and denote by $s(D): \J \to \M$ the other projection in $\M$.  Denote by $\colim |D|$ and $\colim s(D)$ the respective colimits. Similarly denote by $\limi |D|$ and $\limi s(D)$ the respective limits. We implicitly assume the size of $\J$ is such that both objects exist simultaneously in $\M$ and $\ag$.

\begin{lem}\label{lem-limit-colimit}
Let $\Ub: \ag \leftrightarrows \M: \Fb$ be an adjunction between locally presentable categories, where $\Ub$ is faithful and reflects isomorphisms. Then with the previous notation the following hold. 
\begin{enumerate}
\item There is a unique map  $$\colim s(D) \to \Ub[\colim |D|]$$ 
in $\M$ induced by the universal property of the colimit-object  $\colim s(D)$. This morphism is the colimit of the diagram  $D: \J \to \mua$.
\item Similarly, there is a unique map  $$\limi s(D) \to \Ub[\limi |D|]$$ in $\M$ induced by the universal property of the limit $\Ub(\lim|D|)$, since $\Ub$ preserves limits. This morphism is the limit of the diagram  $D: \J \to \mua$. $\qed$
\end{enumerate}
\end{lem}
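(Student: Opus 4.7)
The plan is to exploit the fact that $\mua$ is (up to the faithfulness condition) the comma category $(\M \downarrow \Ub)$, whose limits and colimits are computed componentwise in $\M$ and $\ag$. Since $\Ub$ is faithful, a morphism in $\mua$ is uniquely determined by its source and target components, so the universal properties in $\M$ and $\ag$ will automatically transport to $\mua$ once we verify that the candidate objects are well-defined.

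For the colimit statement, I would first form $\colim s(D)$ in $\M$ and $\colim |D|$ in $\ag$ separately; both exist because $\M$ and $\ag$ are cocomplete (locally presentable). For each $j \in \J$, the composite
\[
s(D(j)) \xrightarrow{D(j)} \Ub(|D(j)|) \xrightarrow{\Ub(\iota_j)} \Ub(\colim |D|),
\]
where $\iota_j$ is the structural map into the colimit in $\ag$, assembles into a cocone on $s(D)$ in $\M$ by naturality of $D$ and functoriality of $\Ub$. The universal property of $\colim s(D)$ then produces a unique map $\colim s(D) \to \Ub(\colim |D|)$, and this is an object of $\mua$. To check it is the colimit, I would take an arbitrary cocone $(D(j) \to \G)_{j}$ in $\mua$, extract the induced cocones on $s(D)$ and $|D|$, invoke the universal properties of the componentwise colimits to get unique maps $\colim s(D) \to \G_0$ and $\colim |D| \to \G_1$, and then observe that the required compatibility square in $\M$ commutes by testing against each summand of the colimit (so that equality is forced by the universal property of $\colim s(D)$).

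For the limit statement, the argument is actually easier because $\Ub$ is a right adjoint and therefore preserves limits, giving a canonical isomorphism $\Ub(\limi |D|) \cong \limi \Ub(|D|)$ in $\M$. Forming $\limi s(D)$ in $\M$, the structural maps $\limi s(D) \to s(D(j)) \xrightarrow{D(j)} \Ub(|D(j)|)$ assemble into a cone on the diagram $\Ub \circ |D|$, and the universal property of the limit $\limi \Ub(|D|) \cong \Ub(\limi |D|)$ produces a unique map $\limi s(D) \to \Ub(\limi |D|)$. Checking universality against an arbitrary cone $(\G \to D(j))_j$ proceeds symmetrically to the colimit case.

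There is no serious obstacle: the argument is a routine unwinding of the componentwise description of limits and colimits in a comma category, and the only ingredients needed are (i) local presentability of $\M$ and $\ag$ to ensure existence of the componentwise (co)limits and (ii) the fact that $\Ub$ is a right adjoint, which is used only in the limit part to identify $\Ub(\limi |D|)$ with $\limi \Ub(|D|)$. The faithfulness of $\Ub$ enters implicitly to guarantee that morphisms in $\mua$ are determined by their source and target data, so uniqueness in the universal properties transfers without issue.
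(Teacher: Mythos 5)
Your argument is correct and is exactly the routine componentwise verification that the paper has in mind: the paper states this lemma with no written proof (it is marked $\qed$ as a direct checking), and your construction of the canonical comparison maps, the use of cocompleteness of $\ag$ for the colimit, of limit-preservation by the right adjoint $\Ub$ for the limit, and of faithfulness to transfer uniqueness, supplies precisely the omitted details. Nothing further is needed.
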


The following result can be proved directly, but we refer the reader to \cite{Low_heart} for a much more general proof. 
\begin{prop}
Let  $\Ub: \ag \leftrightarrows \M: \Fb$ be an adjunction between locally presentable categories, where $\Ub$ is faithful and reflects isomorphism.

Then the category $\mua= (\M \downarrow \Ub)$ is also locally presentable.
\end{prop}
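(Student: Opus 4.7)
The plan is to invoke the characterization of locally presentable categories as those that are simultaneously accessible and cocomplete (see \cite{Adamek-Rosicky-loc-pres}). Cocompleteness of $\mua$ is already handed to us by Lemma \ref{lem-limit-colimit}, which in fact also gives completeness. So the only remaining task is to prove accessibility of $\mua$.

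The key step is to realize $\mua$ as the comma category $(\Id_{\M} \downarrow \Ub)$ of a pair of accessible functors between accessible categories. Since $\ag$ and $\M$ are both locally presentable and $\Ub$ is a right adjoint, the adjoint functor theorem for accessible categories (e.g.\ \cite{Adamek-Rosicky-loc-pres}, Thm.\ 2.17) guarantees that $\Ub$ is accessible: there exists a regular cardinal $\lambda$ such that both $\ag$ and $\M$ are (sharply) $\lambda$-accessible and $\Ub$ preserves $\lambda$-filtered colimits. The identity $\Id_{\M}$ is trivially accessible. By the standard closure theorem for accessible categories under comma constructions (equivalently, under PIE- or pseudo-limits in the 2-category of accessible categories and accessible functors; see \cite{Adamek-Rosicky-loc-pres}, Prop.\ 2.43), the comma category $\mua$ is then $\mu$-accessible for some $\mu \geq \lambda$.

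For a more hands-on verification, once $\lambda$ is chosen large enough, I expect the $\lambda$-presentable objects of $\mua$ to be precisely the preobjects $\F: a \to \Ub(b)$ with $a$ $\lambda$-presentable in $\M$ and $b$ $\lambda$-presentable in $\ag$. There is only a set of such objects up to isomorphism, and by Lemma \ref{lem-limit-colimit} together with the fact that $\Ub$ preserves $\lambda$-filtered colimits, every object of $\mua$ is the $\lambda$-filtered colimit of its canonical diagram of $\lambda$-presentable subobjects, which gives accessibility directly.

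The main technical point is the coordination of cardinal bounds: one must pick $\lambda$ so that $\M$ and $\ag$ are simultaneously $\lambda$-accessible, $\Ub$ preserves $\lambda$-filtered colimits, and $\Ub$ sends a chosen generating set of $\lambda$-presentables in $\ag$ to $\lambda$-presentables in $\M$. This is exactly the sort of size-management that the sharply-accessible refinement was designed for, and is also treated carefully in \cite{Low_heart}, which the paper has already flagged as the reference for combinatorial size issues in this setting. Once $\lambda$ is fixed, the rest of the argument is formal.
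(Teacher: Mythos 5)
Your argument is correct, and it is worth noting that the paper itself does not actually prove this proposition: it only remarks that the result ``can be proved directly'' and defers to \cite{Low_heart} for a more general treatment. What you have written is precisely the standard direct argument that the paper waves at. The decomposition into cocompleteness (already supplied by Lemma \ref{lem-limit-colimit}) plus accessibility, with accessibility obtained by viewing $\mua$ as the comma category $(\Id_{\M}\downarrow \Ub)$ of accessible functors and invoking the closure of accessible categories under comma constructions from \cite{Adamek-Rosicky-loc-pres}, is exactly right; the fact that $\Ub$ is accessible because it is a right adjoint between locally presentable categories is also the correct justification. Note, as the paper itself observes in the remark following the proposition, that neither faithfulness nor reflection of isomorphisms is used anywhere in this argument, so your proof establishes the statement under weaker hypotheses. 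One small caution on your ``hands-on'' second paragraph: the claim that the $\lambda$-presentable objects of $\mua$ are \emph{precisely} those $\F\colon a\to \Ub(b)$ with both $a$ and $b$ $\lambda$-presentable is an overstatement (the class of $\lambda$-presentables is closed under retracts and $\lambda$-small colimits, so it can be larger); what is true, and what suffices, is that these objects are $\lambda$-presentable and form a dense (in particular strong) generator, so that every object is a $\lambda$-filtered colimit of them. Since you hedge this point and your first paragraph already completes the proof, the gap is cosmetic rather than substantive.
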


\begin{rmk}
The requirement that $\Ub$ reflects isomorphisms might not be necessary in most cases. This hypothesis is suggested by the case of algebras. 
\end{rmk}
\section{Right-induced model structure}
Consider the adjunction $s: \mua \leftrightarrows \M: \Fb_s$. Recall that in  Definition \ref{df-easy-weak}, we call a  morphism $\sigma: \F \to \G$,  an \emph{easy} weak equivalence (resp. fibration) if $s(\sigma): \F_0 \to \G_0$ is a weak equivalence (resp. fibration) in $\M$. 
  
We would like to show using the well know result \cite[Theorem 11.3.2]{Hirsch-model-loc}, that there is a model structure on $\mua$ with the above definition of weak equivalences and fibrations.\\

First we have a direct consequence of Remark \ref{rmk-adj-gamma}:
\begin{prop}\label{prop-cof-unchange}
If $\chi: f \to g$ is a map in the arrow-category $\M^{\Un}$ such that $s(\chi): f_0 \to g_0$ is a (trivial) cofibration (resp. weak equivalence), then  $s(\Gamma(\chi))= s(\chi)$ is also a (trivial) cofibration (resp. weak equivalence). 
\end{prop}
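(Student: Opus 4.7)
The plan is to reduce the statement to the content of Remark \ref{rmk-adj-gamma}. That remark records the explicit description of the left adjoint $\Gamma: \M^{\Un} \to \mua$ produced in the preceding lemma: given any morphism $\chi: f \to g$ in the arrow category, the source component is preserved, namely $\Gamma(\chi)_0 = \chi_0$. This is the only ingredient needed.

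First I would apply the source functor $s: \mua \to \M$ to $\Gamma(\chi)$. By definition $s$ picks out the component indexed by $0 \in \Un$, so $s(\Gamma(\chi)) = \Gamma(\chi)_0$, and this in turn equals $\chi_0 = s(\chi)$ by Remark \ref{rmk-adj-gamma}. Hence $s(\Gamma(\chi))$ and $s(\chi)$ are literally the same morphism of $\M$, which already establishes the equality in the statement.

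Because the two morphisms coincide in $\M$, any property of one as a morphism of the ground model category transfers to the other. In particular, if $s(\chi)$ is a cofibration, a trivial cofibration, or a weak equivalence, then so is $s(\Gamma(\chi))$, giving all three cases simultaneously. There is no genuine obstacle here: the entire content of the proposition is bookkeeping built on the formula for $\Gamma$ recorded in Remark \ref{rmk-adj-gamma}, and its purpose is simply to isolate, for later reference in the proof of Theorem \ref{easy-model-mua}, the fact that $\Gamma$ interacts trivially with the left-hand column of maps that one uses to generate the easy model structure by right induction along $s$.
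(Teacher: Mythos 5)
Your proposal is correct and matches the paper exactly: the paper states Proposition \ref{prop-cof-unchange} as a direct consequence of Remark \ref{rmk-adj-gamma} (namely $\Gamma(\chi)_0 = \chi_0$), which is precisely the reduction you carry out. Nothing is missing.
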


The following lemma is a key ingredient. The proof is  based on how we compute colimits in $\mua$. Indeed we saw before that the source-functor $s: \mua \to \M$ preserves colimits. 
\begin{lem}\label{lem-pushout-important}
Let $\theta: \F \to \G$ be a map in $\mua$ and let $D$ be a pushout square in $\mua$:
 \[
 \xy
(0,18)*+{\F}="W";
(0,0)*+{ \G}="X";
(30,0)*+{\Ba}="Y";
(30,18)*+{\Aa}="E";
{\ar@{->}^-{\ol{\sigma}}"X";"Y"};
{\ar@{->}^-{\theta}"W";"X"};
{\ar@{->}^-{\sigma}"W";"E"};
{\ar@{->}^-{\varepsilon}"E";"Y"};
\endxy
\]
Then the image of $D$ by the $s$ is the pushout square in $\M$:
 \[
 \xy
(0,18)*+{\F_0}="W";
(0,0)*+{ \G_0}="X";
(30,0)*+{\Ba_0}="Y";
(30,18)*+{\Aa_0}="E";
{\ar@{->}^-{\ol{\sigma}_{0}}"X";"Y"};
{\ar@{->}^-{\theta_0}"W";"X"};
{\ar@{->}^-{\sigma_0}"W";"E"};
{\ar@{->}^-{\varepsilon_0}"E";"Y"};
\endxy
\]
In particular:
\begin{itemize}[label=$-$]
\item The map $\varepsilon_{0}: \Aa_0 \to \Ba_0$ is a (trivial) cofibration if $\theta_0$ is a (trivial) cofibration;
\item Since $\M$ is left proper, the map $\ol{\sigma}_{0}$ is a weak equivalence if $\theta_0$ is a cofibration and if $\sigma_0$ is a weak equivalence. 
\end{itemize}
\end{lem}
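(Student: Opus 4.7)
The plan is straightforward: reduce everything to the fact that $s\colon \mua \to \M$ preserves colimits, which was established in the previous lemma (the source functor has a left adjoint $\Fb_s$, and in the presence of a terminal object of $\ag$ even a right adjoint $taut_s$, so it certainly preserves pushouts). Concretely, I would first note that by that lemma, the image under $s$ of the pushout square $D$ in $\mua$ is again a pushout square in $\M$; the square displayed in the statement has exactly the right shape, so nothing needs to be recomputed, it is already the correct pushout.

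Once the second square is recognized as a genuine pushout in $\M$, the two bulleted consequences are immediate applications of standard model-categorical facts. For the first bullet, I would invoke the model category axiom that the class of cofibrations and the class of trivial cofibrations are each stable under cobase change: the morphism $\varepsilon_0$ is the pushout of $\theta_0$ along $\sigma_0$ in $\M$, so if $\theta_0$ is a (trivial) cofibration then so is $\varepsilon_0$. For the second bullet, I would invoke the definition of left properness of $\M$: the pushout of a weak equivalence $\sigma_0$ along a cofibration $\theta_0$ is a weak equivalence, giving that $\ol{\sigma}_0$ is a weak equivalence.

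The only point that requires even a line of justification is the preservation-of-pushouts step, and that is handled entirely by citing the adjunction $\Fb_s \dashv s$ from the preceding lemma, which makes $s$ a right adjoint's left adjoint (in either adjunction direction it turns out to preserve colimits in this case). There is no real obstacle here: once the square in $\M$ is identified as a pushout, both conclusions are literal restatements of a model category axiom and of the definition of left properness applied to $\M$. The lemma's content is really just the observation that, for the ``easy'' structure we are setting up, all relevant constructions in $\mua$ can be tested on sources, where we already have a combinatorial left proper model category to work with.
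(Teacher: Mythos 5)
Your overall strategy is exactly the paper's: the lemma reduces to the preservation of pushouts by the source functor $s$, after which the first bullet is the cobase-change stability of (trivial) cofibrations and the second is the definition of left properness applied in $\M$; both of those you handle correctly. However, the one step that carries any content is justified by the wrong adjunction. You say the preservation of pushouts is ``handled entirely by citing the adjunction $\Fb_s \dashv s$''; that adjunction exhibits $s$ as a \emph{right} adjoint, which yields preservation of limits, not colimits, so it proves nothing about pushouts, and the parenthetical ``in either adjunction direction it turns out to preserve colimits'' is not a valid principle. The correct justification is either the explicit computation of colimits in the comma category from Lemma \ref{lem-limit-colimit} --- the colimit of a diagram $D$ in $\mua$ is the canonical map $\colim s(D) \to \Ub[\colim |D|]$, whose source is literally $\colim s(D)$, so $s$ preserves all colimits --- or, equivalently, the existence of the right adjoint $taut_s$ (available because $\ag$, being locally presentable, has a terminal object), which makes $s$ a \emph{left} adjoint and hence colimit-preserving. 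Since you mention $taut_s$ in passing, the repair is immediate, but as written the stated reason for the only nontrivial step is incorrect and should be replaced by one of these two arguments.
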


\paragraph{A note about Theorem \ref{easy-model-mua}}

With the previous lemma at hand, Theorem \ref{easy-model-mua} is a direct application of  \cite[Theorem 11.3.2]{Hirsch-model-loc}. 
\begin{proof}[Proof of Theorem \ref{easy-model-mua}]
Let $\I$ and $\Ja$ be respectively the generating sets of cofibrations and trivial cofibrations in $\M$. For $f \in \Ja$, clearly the source of $\Fb_s(f)$ is just $f$ itself. Lemma \ref{lem-pushout-important} gives the required condition in  \cite[Theorem 11.3.2]{Hirsch-model-loc}. \\

The other assertion of Lemma \ref{lem-pushout-important} gives the left properness of the model structure.

The set $\Fb_s(\I)$ is a set of generating cofibrations and the set $\Fb_s(\Ja)$ is a set of generating trivial cofibrations.

\end{proof}

\begin{rmk}
As mentioned in the introduction, we can apply Theorem \ref{easy-model-mua} to the identity $\Ub: \M \to \M$. This gives an easy model structure on $\M^{\Un}$ that will be denoted by $\M^{\Un}_e$ . It's clear that the adjunction 
$$\Pi: \mua_e \to \M^{\Un}_e: \Gamma $$
is a Quillen adjunction where $\Gamma$ is left Quillen.
\end{rmk}

\section{Generating sets for the Bousfield localizations}\label{section-loc-set}

\begin{nota}\label{notation-set-localization}
If $\alpha: U \to V$ is a morphism of $\M$, we will denote by 
$$\alpha_{\downarrow_{\Id_V}}: \alpha \to \Id_V,$$ 
the morphism in the arrow category $\M^{\Un}$ which is identified with the following commutative square. 
\[
\xy
(0,18)*+{U}="W";
(0,0)*+{V}="X";
(30,0)*+{V}="Y";
(30,18)*+{V}="E";
{\ar@{->}^-{\Id}"X";"Y"};
{\ar@{->}^-{\alpha}"W";"X"};
{\ar@{->}^-{\alpha}"W";"E"};
{\ar@{->}^-{\Id}"E";"Y"};
\endxy
\]
\end{nota}
\begin{rmk}\label{rmk-projec-com}
Following our previous notation, the component $(\av)_0$ is exactly the map $\alpha:U \to V$ and the component $(\av)_1$ is the identity $\Id_V$ (which is a wonderful isomorphism). Then by definition of $\Gamma$ and $|-|$ we know that the image $|\Gamma(\av)|$ in $\ag$ is the identity $\Fb(\Id_{\alpha_1})=\Fb(\Id_{V})$ which is an isomorphism in $\ag$.
\end{rmk}

%%%%%%%%%%%%%%%%%%%%%%%%%%%%

\begin{df}
 Let $\I$ be set of generating cofibrations of $\M$.
\begin{enumerate}
\item Define the \emph{localizing set} for $\mua$ as
$$\kb(\I):=  \{\Gamma(\av); \quad \alpha \in \I  \}.$$
\item Let $\ast$ be the coinitial (or terminal) object of $\coms$ and let  $\sigma$ be map in $\coms$. Say that an object $\F \in \msxsu$ is $\sigma$-injective if the unique map $\F \to \ast$ has the RLP with respect to $\sigma$. 
\item Say that $\F$ is $\kb(\I)$-injective if $\F$ is $\sigma$-injective for all $\sigma \in \kb(\I)$.
\end{enumerate}
\end{df}

\subsection{Characteristics of the set $\kb(\I)$}
The following proposition is not hard, one simply needs to write down everything.
\begin{prop}\label{lem-lifting-inject}
Let $\theta=(f,g): \alpha \to p$ be a morphism in $\M^{\Un}$ which is represented by the following commutative square.

\[
\xy
(0,20)*+{U}="A";
(20,20)*+{X}="B";
(0,0)*+{V}="C";
(20,0)*+{Y}="D";
{\ar@{->}^{f}"A";"B"};
{\ar@{->}_{\alpha}"A";"C"};
{\ar@{->}^{p}"B";"D"};
{\ar@{->}^{g}"C";"D"};
\endxy
\]  

Then the following are equivalent. 
\begin{itemize}[label=$-$]
\item There is a lifting in the commutative square above i.e there exists $k: V \to X$  such that: $k \circ \alpha =f$, $p \circ k=g$.
\item There is a lifting in the following square of $\M^{\Un}$.
\[
\xy
(0,20)*+{\alpha}="A";
(20,20)*+{p}="B";
(0,0)*+{\Id_V}="C";
(20,0)*+{\ast}="D";
{\ar@{->}^{\theta}"A";"B"};
{\ar@{->}_{\av}"A";"C"};
{\ar@{->}_{}"C";"D"};
{\ar@{->}_{}"B";"D"};
\endxy
\]  

That is, there exists $\beta=(k,l): \Id_V \to p$ such that $\beta \circ \av= \theta$. 
\end{itemize}
\end{prop}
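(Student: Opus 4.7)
The plan is to prove both implications by directly unpacking the definitions in the arrow category $\M^{\Un}$. Recall that a morphism $\beta : \Id_V \to p$ in $\M^{\Un}$ is a pair $\beta = (\beta_0, \beta_1)$ of morphisms in $\M$ satisfying the commutativity condition $p \circ \beta_0 = \beta_1 \circ \Id_V = \beta_1$. Likewise $\theta = (f,g)$ is specified by $\theta_0 = f$ and $\theta_1 = g$, and by Remark \ref{rmk-projec-com} we have $(\av)_0 = \alpha$ and $(\av)_1 = \Id_V$. Commutativity of the right-hand triangle of the second square is automatic since $\ast$ is terminal.

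For the forward direction, given a lift $k : V \to X$ with $k\circ \alpha = f$ and $p \circ k = g$, I would define $\beta := (k,g)$. The identity $p \circ k = g$ is precisely the compatibility condition making $\beta$ a morphism $\Id_V \to p$ in $\M^{\Un}$. To check $\beta \circ \av = \theta$, I would compute component-wise: $(\beta\circ\av)_0 = k\circ\alpha = f = \theta_0$ and $(\beta\circ\av)_1 = g\circ \Id_V = g = \theta_1$.

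For the reverse direction, given $\beta = (\beta_0,\beta_1)$ with $\beta \circ \av = \theta$, comparing target components forces $\beta_1 = \beta_1 \circ \Id_V = \theta_1 = g$. The compatibility condition for $\beta$ then reads $p\circ \beta_0 = g$. Comparing source components gives $\beta_0 \circ \alpha = f$. Hence $k := \beta_0$ provides the desired lift in the original square.

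The proof contains no real obstacle, only careful bookkeeping; the entire content is the observation that $(\av)_1 = \Id_V$ forces the target component of any $\beta$ factoring $\theta$ through $\av$ to coincide with $g$, which in turn rigidifies the source component into a lift. This is why the proposition is essentially a tautology, and it is exactly what makes $\kb(\I)$ the right localizing set: lifting against $\av$ detects lifting against $\alpha$.
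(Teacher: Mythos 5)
Your proof is correct and is precisely the ``write down everything'' computation the paper alludes to (the paper gives no separate argument, stating only that the proposition follows from direct unpacking). The key bookkeeping points --- that $(\av)_1=\Id_V$ forces $\beta_1=g$, that the compatibility condition for $\beta$ then yields $p\circ\beta_0=g$, and that commutativity over the terminal object is automatic --- are all handled correctly.
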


Thank to this proposition and the fact that trivial fibrations in $\M$ are the $\I$-injective maps, we can establish by adjointness the following result. 
\begin{lem}\label{k-inj-cosegal}
Let $\F$ be an object of $\coms$. Then with the previous notation, the following hold.
\begin{enumerate}
\item $\F$ is $\kb(\I)$-injective if and only if $\F$ underlies a trivial fibration in $\M$. In particular if $\F$ is  $\kb(\I)$-injective, then $\F$ is a $\Ub$-QS-object.
\item Every object $\F \in \com$  is $\kb(\I)$-injective.
\end{enumerate}
\end{lem}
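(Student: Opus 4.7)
The plan is to reduce the injectivity condition on $\F \in \mua$ to a classical lifting condition on the underlying morphism $\Pi(\F)$ in $\M$, peeling off the two layers of structure one at a time. First I would use the adjunction $\Gamma \dashv \Pi$ established in the previous lemma to move the lifting problem from $\mua$ to $\M^{\Un}$, and then I would use Proposition \ref{lem-lifting-inject} to move it from $\M^{\Un}$ down to $\M$. Part (2) will then follow instantly because objects of $\com$ sit inside $\mua$ as identity arrows via $\iota$.

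For part (1), fix $\alpha \in \I$. Since $\Ub$ is a right adjoint it preserves the terminal object, hence so does $\Pi$; therefore the unique map $\F \to \ast$ in $\mua$ transposes across $\Gamma \dashv \Pi$ to the unique map $\Pi(\F) \to \ast$ in $\M^{\Un}$, and a lifting of $\Gamma(\av)$ against $\F \to \ast$ in $\mua$ corresponds bijectively to a lifting of $\av$ against $\Pi(\F) \to \ast$ in $\M^{\Un}$. Applying Proposition \ref{lem-lifting-inject} with $p := \Pi(\F)\colon \F_0 \to \F_1$, this is in turn equivalent to $\Pi(\F)$ having the RLP with respect to $\alpha$ in $\M$. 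Letting $\alpha$ range over $\I$ yields the chain of equivalences: $\F$ is $\kb(\I)$-injective iff $\Pi(\F)$ is $\I$-injective iff $\Pi(\F)$ is a trivial fibration of $\M$ (since $\I$ is a set of generating cofibrations). Such $\Pi(\F)$ is in particular a weak equivalence whose target lies in $\Ub(\ag)$, so $\F$ is a $\Ub$-QS-object.

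For part (2), any $\F \in \com$ is of the form $\iota(b)$ for some $b \in \ag$, and by the very definition of $\iota$ the underlying morphism $\Pi(\F)$ is the identity $\Id_{\Ub(b)}$, which is trivially a trivial fibration. Applying part (1) gives the desired $\kb(\I)$-injectivity. The only mildly delicate step in the whole argument is bookkeeping: one must carefully match the convention "injectivity relative to $\F \to \ast$" of the present section with the exact shape of the square appearing in Proposition \ref{lem-lifting-inject}, and verify that the adjunction transpose carries the terminal object of $\mua$ to the terminal object of $\M^{\Un}$ — both of which are formal consequences of $\Ub$ being a right adjoint.
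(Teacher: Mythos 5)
Your argument is correct and follows essentially the same route as the paper: transpose the lifting problem across the adjunction $\Gamma \dashv \Pi$, apply Proposition \ref{lem-lifting-inject} to reduce to the RLP of $\Pi(\F)$ against $\I$ in $\M$, and conclude via the characterization of trivial fibrations as $\I$-injective maps, with part (2) following because objects of $\com$ are supported by identities. Your extra bookkeeping about the terminal object being preserved is a worthwhile clarification but does not change the argument.
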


\begin{proof}
If $\F$ is $\kb(\I)$-injective, by definition, $\F$ is $\Gamma(\av)$-injective for all generating cofibration $\alpha$ in $\M$. And by adjointness we find that $\F$ is $\av$-injective. Thanks to the previous proposition, this is equivalent to saying that any lifting problem defined by $\alpha$ and $\F$ has a solution.

 Consequently $\F$ is $\kb(\I)$-injective if and only if  $\F$ has the RLP with respect to all maps in $\I$, if and only if  $\F$ is a trivial fibration as claimed. This proves Assertion $(1)$. 

Assertion $(2)$ is a corollary of Assertion $(1)$ since an object of $\com$ is supported by an identity, and every identity is a trivial fibration in $\M$. 
\end{proof}

\section{Augmented model structure}
The discussion that follows is a direct adaptation of what we did for co-Segal categories and algebras (see \cite{Bacard_lpcosegx}).
\begin{nota}
\begin{enumerate}
\item Denote by $\I_{\coms}= \Fb_s(\I)$ the set of generating cofibrations in  $\comse$.
\item Denote $\I_{\coms}^+$ the set $\I_{\comse} \sqcup \kb(\I)$
\end{enumerate}
\end{nota} 

As mentioned in Remark \ref{rmk-projec-com}, for any $\sigma$ in  $\I_{\coms}^+$, the component $s(\sigma)$ is a cofibration in $\M$. \\

We use Smith's recognition Theorem for combinatorial model categories (see for example Barwick \cite[Proposition 2.2]{Barwick_localization}). This theorem gives the possibility to  construct a combinatorial model category out of two data consisting of a class $\W$ of morphisms whose elements are called \emph{weak equivalences}; and a set $\I$ of \emph{generating cofibrations}. \\

We actually use Lurie's version \cite[Proposition A.2.6.13]{Lurie_HTT}. This version asserts that the resulting combinatorial model structure is automatically left proper. We recall this theorem hereafter with the same notation as in Lurie's book. 
\begin{prop}\label{Smith-Lurie}
Let $\bf{A}$ be a presentable category. Suppose we are given a class $W$ of morphisms of A, which we will call weak equivalences, and a (small) set $C_0$ of morphisms of $\bf{A}$, which we will call generating cofibrations. Suppose furthermore that the following assumptions are satisfied:
\begin{itemize}
\item[$(1)$] The class $W$ of weak equivalences is perfect (\cite[Definition A.2.6.10]{Lurie_HTT}). 
\item[$(2)$] For any diagram
\[
\xy
(0,15)*+{X}="A";
(20,15)*+{Y}="B";
(0,0)*+{X'}="C";
(20,0)*+{Y'}="D";
%%%%%%%
(0,-15)*+{X''}="X";
(20,-15)*+{Y''}="Y";
{\ar@{->}^-{f}"A";"B"};
{\ar@{->}_-{}"A";"C"};
{\ar@{->}^-{}"B";"D"};
{\ar@{->}^-{}"C";"D"};
{\ar@{->}^-{}"X";"Y"};
{\ar@{->}^-{g}"C";"X"};
{\ar@{->}^-{g'}"D";"Y"};
%%%%%%
\endxy
\] 

in which both squares are coCartesian (=pushout square), $f$ belongs to $C_0$, and $g$ belongs $W$, the map $g'$ also below to $W$.
\item[$(3)$] If $g: X \to Y$ is a morphism in $\bf{A}$ which has the right lifting property with respect to every morphism in $C_0$, then $g$ belongs to $W$.
\end{itemize}

Then there exists a left proper combinatorial model structure on $\bf{A}$ which
may be described as follows:
\begin{itemize}
\item[$(C)$] A morphism $f : X \to  Y$ in $\bf{A}$ is a cofibration if it belongs to the weakly
saturated class of morphisms generated by $C_0$.
\item[$(W)$]  A morphism $f:X\to Y$ in $\bf{A}$ is a weak equivalence if it belongs to $W$.
\item[$(F)$] A morphism $f:X\to Y$ in $\bf{A}$ is a  fibration if it has the right lifting property with respect to every map which is both a cofibration and a weak equivalence.
\end{itemize}
\end{prop}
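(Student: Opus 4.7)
The plan is to apply the small object argument twice with two different sets of generating cofibrations, deducing the existence of the second set from the perfectness of $W$. First I would apply the small object argument to $C_0$ to obtain, for any morphism $h$, a functorial factorization $h = p \circ i$ with $i \in \cof(C_0)$ cellular and $p \in \inj(C_0)$ having the right lifting property against $C_0$. Hypothesis $(3)$ then says precisely that every such $p$ is a weak equivalence, so this already supplies one of the two factorization axioms, with cofibrations defined as $\cof(C_0)$ and trivial fibrations as $\inj(C_0)$.

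The main obstacle is producing a small set $J \subseteq \cof(C_0) \cap W$ such that $\cof(J)$ coincides with the class $\cof(C_0)\cap W$ of all cofibrations that are weak equivalences. This is where the perfectness of $W$ is essential: it asserts that $W$, viewed as a full subcategory of the arrow category $\mathbf{A}^{[1]}$, is accessible and closed under filtered colimits, and in particular is determined by its $\kappa$-compact objects for some regular cardinal $\kappa$. Choosing $\kappa$ large enough that the domains and codomains of all maps in $C_0$ are $\kappa$-compact and that the small object factorization respects $\kappa$-filtered colimits, one extracts such a set $J$ consisting of $\kappa$-compact cofibrations in $W$, with the property that every cofibration in $W$ is a retract of a transfinite composition of pushouts of maps in $J$. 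This accessibility-theoretic bookkeeping is the delicate content of Smith's theorem; I would follow the clean presentation in \cite{Lurie_HTT}.

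Once $J$ is in hand, I would apply the small object argument a second time to obtain the remaining factorization as a map in $\cof(J)$ followed by a map with the right lifting property against $J$; the latter maps are declared to be the fibrations. Hypothesis $(2)$ ensures that pushouts along maps in $C_0$ preserve weak equivalences, and the standard transfinite argument extends this to pushouts along arbitrary maps in $\cof(C_0)$. This simultaneously yields the inclusion $\cof(J) \subseteq W$ needed to identify the trivial cofibrations, and gives the left properness of the resulting model structure. The $2$-out-of-$3$ and retract axioms for $W$ are built into perfectness, and the remaining lifting axioms follow formally from the two small object argument factorizations. The principal difficulty, as noted, is the accessibility-theoretic construction of $J$; once it is settled, all other verifications are routine.
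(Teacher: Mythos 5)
This proposition is not proved in the paper at all: it is a verbatim restatement of Lurie's Proposition A.2.6.13 in \cite{Lurie_HTT}, imported as a black box (the paper's only ``proof'' is the citation). So there is no argument of the author's to compare yours against; the relevant comparison is with Lurie's own proof. Your outline does follow the standard architecture of that proof: one small object argument on $C_0$ gives the (cofibration, trivial fibration) factorization, with hypothesis $(3)$ identifying $\inj(C_0)$ as weak equivalences; hypothesis $(2)$ together with the closure of a perfect class under filtered colimits and retracts propagates ``cobase change along $C_0$ preserves $W$'' to all of $\cof(C_0)$, which yields both $\cof(J)\subseteq W$ and left properness; and a second small object argument on a set $J$ of generating trivial cofibrations finishes the factorization and lifting axioms.

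The one place where your sketch is not yet a proof is exactly the step you flag: producing $J$ with $\cof(J)=\cof(C_0)\cap W$. As written (``choose $\kappa$ large and take the $\kappa$-compact cofibrations in $W$'') this elides the actual mechanism. Accessibility of $W$ in the arrow category does not by itself show that an arbitrary map of $\cof(C_0)\cap W$ is a retract of a $J$-cell complex; the standard route is to choose $J$ satisfying a solution-set condition (every square from a map of $C_0$ to a map of $\cof(C_0)\cap W$ factors through a member of $J$), deduce $\inj(J)\cap W\subseteq\inj(C_0)$, and then run the retract argument on the $J$-factorization of a given trivial cofibration. You defer this to \cite{Lurie_HTT}, which is defensible --- the paper under review does the same --- but it means your proposal is an annotated citation rather than a self-contained argument, and the deferred lemma is where essentially all of the content of Smith's theorem lives.
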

\begin{note}
Here \emph{perfectness} is a property of stability under filtered colimits and a generation by a small set $W_0$ (which is more often the intersection of $W$ and the set of maps between presentable objects). The reader can find the exact definition in \cite[Definition A.2.6.10]{Lurie_HTT}.
\end{note}

\begin{warn}
We've used so far the letters $f, g$ as objects of $\M^{\Un}$ so to avoid any confusion we will use $\sigma,\sigma'$ instead.
\end{warn}

Applying the previous proposition we get the following theorem.
\begin{thm}\label{enlarging-msx}
Let $\M$  and $\Ub: \ag \to \M$ be as before. Then  there exists a combinatorial model structure on $\coms$ which is left proper and which may be described as follows. 
\begin{enumerate}
\item A map $\sigma: \F \to \G$ is a weak equivalence if it's an easy weak equivalence i.e, if it's in $\W_{\comse}$.
\item A map $\sigma: \F \to \G$  is a cofibration if it belongs to the weakly
saturated class of morphisms generated by $ \I_{\coms}^{+}$.
\item A morphism $\sigma: \F \to \G$ is a fibration if it has the right lifting property with respect to every map which is both a cofibration and a weak equivalence
\end{enumerate}
We will denote this model category  by $\comsep$. The identity functor $$ \Id: \comse \to \comsep, $$ is a left Quillen equivalence
\end{thm}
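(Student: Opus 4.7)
The plan is to apply Lurie's version of Smith's recognition theorem (Proposition \ref{Smith-Lurie}) to the presentable category $\mathbf{A}=\coms$, taking $W$ to be the class $\We$ of easy weak equivalences and $C_0 = \I_{\coms}^+ = \I_{\coms} \sqcup \kb(\I)$. Local presentability of $\mua$ was secured in the previous section, so the three hypotheses $(1)$--$(3)$ of the theorem must be checked.

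For the perfectness hypothesis $(1)$, I would exploit that $\sigma \in \We$ iff $s(\sigma)$ is a weak equivalence in $\M$. Because $\M$ is combinatorial, its weak equivalences form a perfect class, and perfectness transfers across the source functor since $s$ is a left adjoint (hence preserves filtered colimits, in fact all colimits); a generating set for $\We$ may be obtained as the image under $\Fb_s$ of a generating set for $\M$'s weak equivalences, together with appropriate accessibility bookkeeping. For hypothesis $(3)$, any map $g$ having the RLP against $\I_{\coms}^+$ in particular has the RLP against $\I_{\coms}=\Fb_s(\I)$, so by adjointness $s(g)$ has the RLP against $\I$ in $\M$ and is therefore a trivial fibration in $\M$; in particular $s(g) \in W_{\M}$, so $g \in \We$. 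For the pushout hypothesis $(2)$, Lemma \ref{lem-pushout-important} tells us that applying $s$ to a pushout square in $\coms$ yields a pushout square in $\M$. It therefore suffices to verify that $s(f)$ is a cofibration in $\M$ for every $f \in \I_{\coms}^+$: if $f \in \Fb_s(\I)$, then $s(f) \in \I$ by the definition of $\Fb_s$, and if $f = \Gamma(\av) \in \kb(\I)$ for some $\alpha \in \I$, then by Remark \ref{rmk-adj-gamma} and Remark \ref{rmk-projec-com} we have $s(\Gamma(\av)) = (\av)_0 = \alpha$, also a cofibration. Left properness of $\M$ then ensures that in the iterated pushout diagram of Proposition \ref{Smith-Lurie}, if $s(g) \in W_{\M}$ and $s(f)$ is a cofibration, then $s(g') \in W_{\M}$, i.e.\ $g' \in \We$.

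For the final statement about the Quillen equivalence $\Id:\comse \to \comsep$, observe that the two model structures share the same class of weak equivalences $\We$, and that the generating set $\I_{\coms}$ for the cofibrations of $\comse$ is contained in the generating set $\I_{\coms}^+$ for the cofibrations of $\comsep$. Hence every (trivial) cofibration of $\comse$ is a (trivial) cofibration of $\comsep$, so $\Id$ is left Quillen. Because the weak equivalences agree on the nose, $\Id$ descends to the identity functor on the localized homotopy categories, which is trivially an equivalence, so the Quillen adjunction is an equivalence.

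The main obstacle I expect is the verification of perfectness in step $(1)$: although morally clear from $\M$ being combinatorial and $s$ preserving filtered colimits, it requires care with the accessibility rank of both $\coms$ and the class $\We$, a point the author defers to the general machinery developed in \cite{Low_heart}. The pushout step $(2)$ is essentially immediate given Lemma \ref{lem-pushout-important} and the observation about source components of $\Gamma(\av)$; step $(3)$ is a one-line consequence of $\I_{\coms} \subseteq \I_{\coms}^+$ together with the $\Fb_s \dashv s$ adjunction.
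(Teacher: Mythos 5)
Your proposal is correct and follows essentially the same route as the paper: apply Proposition \ref{Smith-Lurie} with $W=\We$ and $C_0=\I_{\coms}^+$, verify hypothesis $(2)$ via Lemma \ref{lem-pushout-important} and left properness of $\M$ after noting that every map in $\I_{\coms}^+$ has a cofibration as its source component, deduce hypothesis $(3)$ from $\I_{\coms}\subseteq\I_{\coms}^+$, and obtain the Quillen equivalence from the shared weak equivalences and the inclusion of generating cofibrations. The only cosmetic difference is that the paper disposes of the perfectness hypothesis $(1)$ by observing that $\We$ is already the weak equivalence class of the combinatorial model category $\comse$, which is a touch quicker than your transfer of perfectness along the filtered-colimit-preserving functor $s$, though both arguments are valid.
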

\begin{proof}
Condition $(1)$ is straightforward because $\W_{\comse}$ is the class of weak equivalences in the combinatorial model category $\comse$. We also have Condition $(3)$ since a map $\sigma$ in $\I_{\coms}^{+}\tx{-inj}$ is in particular in $\I_{\coms}\tx{-inj}$, therefore it's a trivial fibration in $\comse$ and thus an easy weak equivalence.\\

It remains to check that Condition $(2)$ is also satisfied. Consider the following diagram as in the proposition.
\[
\xy
(0,10)*+{\F}="A";
(20,10)*+{\G}="B";
(0,0)*+{\F'}="C";
(20,0)*+{\G'}="D";
%%%%%%%
(0,-10)*+{\F''}="X";
(20,-10)*+{\G''}="Y";
{\ar@{->}^-{\sigma}"A";"B"};
{\ar@{->}_-{}"A";"C"};
{\ar@{->}^-{}"B";"D"};
{\ar@{->}^-{}"C";"D"};
{\ar@{->}^-{}"X";"Y"};
{\ar@{->}^-{\theta}"C";"X"};
{\ar@{->}^-{\theta'}"D";"Y"};
%%%%%%
\endxy
\]

 If $\sigma: \F \to \G$ is in $\I_{\coms}^{+}$, as mentioned above each component 
$$\sigma_{0}: \F_0\to \G_0$$ 
is a cofibration in $\M$. Following  Lemma \ref{lem-limit-colimit} the source-functor preserve pushouts in $\coms$. It follows that the top components in that diagram are obtained by pushout in $\M$ and the rest follows from Lemma \ref{lem-pushout-important}.\\

A cofibration in $\comse$ is a cofibration in $\comsep$ and since we have the same weak equivalences  then the identity functor is a Quillen equivalence.
\end{proof}

\begin{cor}\label{kb-cell-complex}
Any $\kb(\I)$-cell complex is a cofibration in $\comsep$.
\end{cor}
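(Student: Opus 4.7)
The plan is essentially to unwind the definitions, since the statement is a formal consequence of how cofibrations in $\comsep$ have been set up in Theorem \ref{enlarging-msx}.

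First I would record the inclusion $\kb(\I) \subseteq \I_{\coms}^{+}$, which is immediate from the notation $\I_{\coms}^{+} := \I_{\coms} \sqcup \kb(\I)$ introduced just before Theorem \ref{enlarging-msx}. Therefore any transfinite composition of pushouts of maps in $\kb(\I)$ is, a fortiori, a transfinite composition of pushouts of maps in $\I_{\coms}^{+}$; in other words, every $\kb(\I)$-cell complex is an $\I_{\coms}^{+}$-cell complex.

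Second, I would invoke clause (2) of Theorem \ref{enlarging-msx}, which characterizes the cofibrations of $\comsep$ as exactly the weakly saturated class generated by $\I_{\coms}^{+}$. Since the weakly saturated class generated by a set always contains every cell complex built from that set (transfinite compositions of pushouts of coproducts of the generators), the inclusion from the previous paragraph gives the conclusion: every $\kb(\I)$-cell complex lies in the class of $\comsep$-cofibrations.

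There is essentially no obstacle here; the entire content sits in Theorem \ref{enlarging-msx}, whose proof has already verified the hypotheses of Smith's (Lurie's) recognition theorem so that the saturation of $\I_{\coms}^{+}$ is well-defined as the cofibrations of a genuine model structure. The corollary is just the observation that enlarging the generating cofibrations from $\I_{\coms}$ to $\I_{\coms}^{+}$ promotes the maps in $\kb(\I)$, together with all their cell complexes, to cofibrations in $\comsep$.
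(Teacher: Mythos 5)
Your proof is correct and matches the paper's (implicit) argument: the paper states this as an immediate corollary of Theorem \ref{enlarging-msx}, and the content is exactly the inclusion $\kb(\I) \subseteq \I_{\coms}^{+}$ together with the fact that the weakly saturated class generated by $\I_{\coms}^{+}$ contains all cell complexes built from any of its subsets. Nothing further is needed.
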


\section{Producing QS-objects}
The present discussion is also inspired by the theory of co-Segal structures (see \cite{Bacard_lpcosegx}). Let's consider a gain our set $\kb(\I)$. Then following Remark \ref{rmk-projec-com} we know that for any $\sigma= \Gamma(\av) \in \kb(\I)$, the image $|\sigma|$ in $\ag$ is the identity $\Fb(V) \xrightarrow{\Id} \Fb(V)$.\\

Now since $\Gamma$ is a left adjoint, it certainly distributes over coproducts. It follows that if $\theta= \sqcup_{J} \sigma_j$ is a coproduct of maps in $\kb(\I)$, then the image $|\theta|$ is a coproduct of identities $\sqcup_J \Id_{\Fb(V)_j}$, which is an isomorphism. 

This tells us what happens when we use the small object argument to produce a $\kb(\I)$-injective replacement functor.  It takes an object $\F: \F_0 \to \F_1$ and factors it as cofibration followed by a trivial fibration, without changing $\F_1$. This factorization will be regarded as the composition of a $\kb(\I)$-cell complex followed by a weak equivalence.\\

 We refer the reader to \cite{Dwyer_Spalinski}, \cite{Hov-model} for a detailed account on the small object argument. We summarize the previous description as a proposition.

\begin{pdef}\label{pdef-deux-constant}
Let $\Sim: \coms \to \coms$ be the $\kb(\I)$-injective replacement functor obtained by the small object argument. Denote by $\tau: Id \to \Sim$ the induced natural transformation.\\

\begin{enumerate}
\item For every $\F \in \coms$,  $\Sim(\F)$ is a $\Ub$-QS-object
\item For every $\F$, we have an isomorphism  $\Sim(\F)_1 \cong \F_1$.
\item For every $\F$,  the map in $\M$ underlying  $\Sim(\F)$ defines a morphism  $\Sim(\F) \to \iota(\F_1)$ in  $\coms$, which is a trivial fibration and in particular, an easy weak equivalence in $\coms$.
\item If we regard $\F: \F_0 \to \F_1$ as a morphism $ \F \to \iota(\F_1)$ in $\coms$, then we have a factorization of this morphism as a $\kb(\I)$-cell complex followed by a weak equivalence in $\comse$ (whence in $\comsep$):
 $$  \F \xrightarrow{\F} \iota(\F_1)= \F \xrightarrow{\tau_\F} \Sim(\F) \xrightarrow{} \iota(\F_1). $$
\item Let $L: \coms\to  \B$ be a functor that sends easy weak equivalences to isomorphisms and takes any $\kb(\I)$-cell complex to an isomorphism. Then for all $\F \in \coms$,  the image by $L$ of the unit $\eta: \F \to \iota(|\F|)$ is an isomorphism in $\B$. 
\end{enumerate}
\end{pdef}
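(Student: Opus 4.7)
The plan is to treat the five assertions in the order (2), then (1) and (3) together, then (4), and finally (5), since each later one relies on the previous. The central observation driving (2) is that both $\Gamma$ and $|-|$ are left adjoints, and that $|\Gamma(\av)| = \Id_{\Fb(V)}$ by Remark \ref{rmk-projec-com}.

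For (2), I would unpack the small object argument used to construct $\Sim$: the object $\Sim(\F)$ is a transfinite colimit whose transition maps are pushouts in $\coms$ along coproducts of morphisms in $\kb(\I)$. Applying $|-|$ to such a coproduct yields an identity in $\ag$, since the composite $|-|\circ\Gamma$ preserves coproducts (both functors being left adjoints) and each $|\Gamma(\av)|$ is an identity by Remark \ref{rmk-projec-com}. Hence each transition map in the $|-|$-projection of the small-object tower is a pushout of an identity, which is an identity. Passing to the transfinite colimit, which is preserved by the left adjoint $|-|$, yields $\Sim(\F)_1 \cong \F_1$, with the structural map $(\tau_\F)_1$ being itself the identity.

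Assertions (1) and (3) follow quickly. By construction $\Sim(\F)$ is $\kb(\I)$-injective, so Lemma \ref{k-inj-cosegal} gives that its underlying morphism is a trivial fibration in $\M$, hence a weak equivalence; this is (1). Using (2), pairing that trivial fibration with $\Id_{\F_1}$ defines a morphism $\Sim(\F) \to \iota(\F_1)$ in $\coms$ whose $s$-component is a trivial fibration in $\M$; by Theorem \ref{easy-model-mua} a map in $\comse$ is a (trivial) fibration if and only if its source is, so this is (3).

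Assertion (4) is then formal: $\tau_\F: \F \to \Sim(\F)$ is a $\kb(\I)$-cell complex by definition of the small object argument, and composing with the easy weak equivalence of (3) gives a morphism $\F \to \iota(\F_1)$ whose target component is the identity (using $(\tau_\F)_1 = \Id$) and whose source component reduces to the original underlying map of $\F$ by the commutativity of the defining square of $\tau_\F$; this composite is exactly the unit $\eta: \F \to \iota(|\F|)$. Assertion (5) is then immediate: $L$ sends the first factor to an isomorphism by the hypothesis on $\kb(\I)$-cell complexes and the second to an isomorphism by the hypothesis on easy weak equivalences, so $L(\eta)$ is an isomorphism. The only step that requires real care is (2); the rest is bookkeeping. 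The reason (2) is subtle is that one must verify the transfinite induction preserves the target component canonically, not just up to some ambient isomorphism, in order for the factorization in (4) to make sense on the nose.
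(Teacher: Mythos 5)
Your proposal is correct and follows essentially the same route as the paper's own (rather more compressed) justification: the paper likewise observes via Remark \ref{rmk-projec-com} that each $|\Gamma(\av)|$ is an identity, that $|-|$ and $\Gamma$ distribute over coproducts (and colimits generally), and hence that the small object argument factors $\F \to \iota(\F_1)$ as a $\kb(\I)$-cell complex followed by a trivial fibration \emph{without changing} $\F_1$, with Lemma \ref{k-inj-cosegal} supplying assertions (1) and (3). Your write-up merely makes explicit the transfinite induction on the target component that the paper leaves implicit.
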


\section{Bousfield localizations}\label{sec-bousfield}
\begin{warn}
We would like to warn the reader about our upcoming notation for the left Bousfield localizations. We choose to include a small letter $\mathbf{c}$ (for ``correct'') as a superscript in both $\comsep$ and $\comse$ to mean that we are taking the left Bousfield localization with respect to the (same) set $\kb(\I)$. A more suggestive and standard notation should be $\Lb^{\kb(\I)}\comse$ or $\kb(\I)^{-1}\comse$, but as the reader can see, this is too heavy to work with.\\

Instead we will use the notation $\comsec$ and $\comsepc$.
\end{warn}

\begin{rmk}

\begin{enumerate}
\item Since we have the same class of weak equivalences in $\comse$ and in $\comsep$, we have an equivalence of function complexes on these model structures. It follows that a map $\sigma$ in $\coms$ is a $\kb(\I)$-local equivalence in the model structure $\comse$ if and only if it's a $\kb(\I)$-local equivalence in the model structure $\comsep$.
\item A direct consequence of this is that the left Quillen equivalence $$\comse \to \comsep,$$ given by the identity, will pass to a left Quillen equivalence between the respective Bousfield localization:
$$\comsec \to \comsepc.$$ 
\end{enumerate}
\end{rmk}

\subsection{The first localized model category}
We start with the localization of $\comsep$.
\begin{thm}\label{main-thm-1}
Let $\M$ and $\Ub: \ag \to \M$ be as before. Then  there exists a combinatorial model structure on $\coms$ which is left proper and which may be described as follows. 
\begin{enumerate}
\item A map $\sigma: \F \to \G$ is a weak equivalence if and only if it's a $\kb(\I)$-local equivalence.
\item A map $\sigma: \F \to \G$ is cofibration if it's a cofibration in $\comsep$.
\item Any fibrant object $\F$ is a $\Ub$-QS-object. 
\item We will denote this model category  by $\comsepc$. 
\item The identity of $\coms$ determines the universal left Quillen functor
$$\Lb_+: \comsep \to \comsepc.$$

\end{enumerate}

This model structure is the left Bousfield localization of $\comsep$ with the respect to the set $\kb(\I)$. 
\end{thm}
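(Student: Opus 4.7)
The plan is to invoke Smith's theorem on left Bousfield localization of combinatorial, left proper model categories (in the form given by Barwick \cite{Barwick_localization}), applied to the model category $\comsep$ constructed in Theorem \ref{enlarging-msx} and to the set of morphisms $\kb(\I)$ defined in Section \ref{section-loc-set}. The hypotheses are essentially automatic: $\comsep$ is combinatorial and left proper by Theorem \ref{enlarging-msx}, while $\kb(\I)$ is a small set by construction, indexed by the generating cofibrations $\I$ of $\M$.

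The localization theorem then directly supplies a combinatorial, left proper model structure on $\coms$ whose cofibrations are exactly the cofibrations of $\comsep$, whose weak equivalences are the $\kb(\I)$-local equivalences, and whose fibrations are those morphisms with the right lifting property against the class of trivial cofibrations so obtained. This is, by definition, $\comsepc$, and the identity $\Lb_+ : \comsep \to \comsepc$ is the universal left Quillen functor inverting the maps in $\kb(\I)$, simultaneously yielding (1), (2), (4) and (5) of the statement.

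The only point that requires genuine input from our setup is assertion (3), namely that every fibrant object of $\comsepc$ is a $\Ub$-QS-object. The key observation is that each $\sigma \in \kb(\I)$ is already a cofibration in $\comsep$ by Corollary \ref{kb-cell-complex}, and it is tautologically a $\kb(\I)$-local equivalence; hence $\sigma$ is a trivial cofibration in $\comsepc$. A fibrant object $\F$ of $\comsepc$ must therefore have the right lifting property against every $\sigma \in \kb(\I)$, i.e.\ $\F$ is $\kb(\I)$-injective in the sense of Section \ref{section-loc-set}. Applying Lemma \ref{k-inj-cosegal} concludes that $\F : \F_0 \to \F_1$ underlies a trivial fibration in $\M$; in particular $\F$ is a weak equivalence in $\M$, so $\F$ is a $\Ub$-QS-object.

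The one subtlety worth highlighting is this last step, which is where the machinery pays off: were the maps $\Gamma(\alpha_{\downarrow_{\Id_V}})$ not already cofibrations, they would need to be replaced by cofibrant approximations before identifying the fibrant objects, obscuring the clean description via Lemma \ref{k-inj-cosegal}. The main obstacle is therefore packaging Corollary \ref{kb-cell-complex} with Lemma \ref{k-inj-cosegal}; once these are in hand the rest is a direct invocation of Smith's theorem.
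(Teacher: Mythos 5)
Your proposal is correct and follows essentially the same route as the paper: existence, left properness, and the description of cofibrations and weak equivalences come from Smith's/Barwick's localization theorem applied to $\comsep$, and assertion (3) is deduced by observing that the elements of $\kb(\I)$ are cofibrations in $\comsep$ that become weak equivalences in the localization, so fibrant objects are $\kb(\I)$-injective and Lemma \ref{k-inj-cosegal} applies. The only cosmetic difference is that you cite Corollary \ref{kb-cell-complex} where the paper simply notes that $\kb(\I)$ is contained in the generating set $\I_{\coms}^{+}$; both give the same fact.
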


\begin{proof}[Proof of Theorem \ref{main-thm-1}]
The existence of the left Bousfield localization and the left properness is guaranteed by Smith's theorem on left Bousfield localization for combinatorial model categories. We refer the reader to Barwick \cite[Theorem 4.7]{Barwick_localization} for a precise statement. This model structure is again combinatorial.\\

For the rest of the proof we will use the following facts on Bousfield localization and the reader can find them in Hirschhorn's book \cite{Hirsch-model-loc}. 
\begin{enumerate}
\item A weak equivalence in $\comsepc$ is a \emph{$\kb(\I)$-local weak equivalence}; we will refer them as \emph{new weak equivalence}. And any easy weak equivalence (old one) is a new weak equivalence.
\item The new cofibrations are the same as the old ones and therefore the new trivial fibrations are just the old ones too. In particular a trivial fibration in the left Bousfield localization is an easy weak equivalence. 
\item The fibrant objects are the  $\kb(\I)$-local objects that are fibrant in the original model structure.
\item Every map in $\kb(\I)$ becomes a weak equivalence in $\comsec$, therefore an isomorphism in the homotopy category.
\end{enumerate}

Let $\F$ be a fibrant object in $\comsepc$, this means that the unique map $\F \to \ast$ has the RLP with respect to any trivial cofibration. Now observe elements of $\kb(\I)$ are trivial cofibrations in $\comsepc$ because they were old cofibrations and become weak equivalences. So any fibrant $\F$ must be in particular $\kb(\I)$-injective and thanks to  Lemma \ref{k-inj-cosegal}, we know that $\F$ underlies a trivial fibration.
\end{proof}
\begin{cor}
Let $\Sim' $ be a  fibrant replacement in $\comsepc$. Then a map $\sigma: \F \to \G$ is a weak equivalence in $\comsepc$ if and only if the map 
$$\Sim'(\sigma): \Sim'(\F) \to \Sim'(\G),$$
 is a level-wise weak equivalence in $\M^{\Un}$. 
\end{cor}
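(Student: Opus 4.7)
The plan is to reduce the statement to two well-known facts about left Bousfield localization and then to exploit the description of the fibrant objects of $\comsepc$ given in Theorem \ref{main-thm-1}.

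First, in any model category $\bf{A}$ with fibrant replacement $\Sim'$ and natural weak equivalence $\tau : \Id \Rightarrow \Sim'$, a map $\sigma : \F \to \G$ is a weak equivalence if and only if $\Sim'(\sigma)$ is a weak equivalence; this is a direct $3$-for-$2$ argument applied to the naturality square of $\tau$ at $\sigma$. Applied to $\bf{A} = \comsepc$, this reduces the corollary to showing that $\Sim'(\sigma)$ is a weak equivalence in $\comsepc$ if and only if it is a level-wise weak equivalence in $\M^{\Un}$.

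Second, I will use the standard fact (see Hirschhorn \cite{Hirsch-model-loc}) that in a left Bousfield localization, a morphism between fibrant objects is a new weak equivalence if and only if it is an old weak equivalence. Since $\Sim'(\F)$ and $\Sim'(\G)$ are fibrant in $\comsepc$, the map $\Sim'(\sigma)$ is a weak equivalence in $\comsepc$ if and only if it is a weak equivalence in $\comsep$, that is to say an easy weak equivalence: by definition this means precisely that the source component $s(\Sim'(\sigma)) : \Sim'(\F)_0 \to \Sim'(\G)_0$ is a weak equivalence in $\M$.

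It remains to upgrade this ``easy'' weak equivalence to a level-wise one. Here I invoke Theorem \ref{main-thm-1}: every fibrant object in $\comsepc$ is a $\Ub$-QS-object, so the morphisms $\Sim'(\F) : \Sim'(\F)_0 \to \Sim'(\F)_1$ and $\Sim'(\G) : \Sim'(\G)_0 \to \Sim'(\G)_1$ are weak equivalences in $\M$. Applied to the commutative square underlying $\Sim'(\sigma)$, a $3$-for-$2$ argument in $\M$ then promotes the source component to a level-wise weak equivalence, namely the target component $t(\Sim'(\sigma))$ is also a weak equivalence. Conversely, a level-wise weak equivalence is in particular an easy weak equivalence, hence a weak equivalence in $\comsep$, hence a weak equivalence in $\comsepc$. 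The only delicate point, if any, is keeping track of which flavour of weak equivalence lives on which model structure; the $3$-for-$2$ bootstrap from easy to level-wise using the QS property of the fibrant replacements is the conceptual heart of the argument.
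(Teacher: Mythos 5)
Your proposal is correct and follows essentially the same route as the paper: reduce via $3$-for-$2$ and naturality of the fibrant replacement, use that a map between fibrant objects in a left Bousfield localization is a new weak equivalence iff it is an old (easy) one, and then upgrade from easy to level-wise using the QS property of fibrant objects. The only difference is that you spell out the final $3$-for-$2$ bootstrap in $\M$, which the paper leaves implicit in the phrase ``an easy weak equivalence between $\Ub$-QS-objects is just a level-wise weak equivalence.''
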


\begin{proof}
Since $\Sim'$ is a fibrant replacement functor in the new model structure, then by the second assertion of the previous theorem, $\Sim'(\F)$ is a $\Ub$-QS-object  for all $\F$.  

 By the $3$-for-$2$ property of weak equivalences in any model category, a map $\sigma$ is a weak equivalence if and only if $\Sim'(\sigma)$ is a weak equivalence. But $\Sim'(\sigma)$ is a weak equivalence of fibrant objects in the Bousfield localization, therefore it's a weak equivalence in the original model structure.

In the end we see that  $\sigma$ is a weak equivalence in $\comsepc$ if and only if $\Sim'(\sigma)$ is an easy weak equivalence in $\comsep$. Now an easy weak equivalence between $\Ub$-QS-objects is just a level-wise weak equivalence.
\end{proof}
\begin{prop}\label{prop-eta-kx-loc-equiv}
For any $\F \in \coms$, the canonical map 
$\F \to |\F|$ is an equivalence in $\comsepc$ i.e, it's a $\kb(\I)$-local equivalence in $\comsep$ (whence in $\comse$).  
\end{prop}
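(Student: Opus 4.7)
The plan is to exploit the factorization constructed in Proposition-Definition \ref{pdef-deux-constant}(4), which writes the canonical map (i.e.\ the unit) $\eta_\F : \F \to \iota(|\F|)$ as a composite
$$\F \xrightarrow{\tau_\F} \Sim(\F) \longrightarrow \iota(\F_1),$$
where $\tau_\F$ is a $\kb(\I)$-cell complex and $\Sim(\F) \to \iota(\F_1)$ is a trivial fibration in $\comse$ (in particular an easy weak equivalence). Since we already know $\iota(|\F|)=\iota(\F_1)$ as objects of $\coms$, it is enough to verify that both factors are weak equivalences in $\comsepc$ and invoke $3$-for-$2$.

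For the second factor, a trivial fibration in $\comse$ is an easy weak equivalence in $\comsep$ and hence a weak equivalence in its left Bousfield localization $\comsepc$, so nothing more is needed there.

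For the first factor I would argue as follows. Every element of $\kb(\I)$ sits inside the set $\I^{+}_{\coms}$ of generating cofibrations of $\comsep$, so each $\sigma \in \kb(\I)$ is a cofibration in $\comsep$; and by the very construction of the left Bousfield localization each such $\sigma$ is also a weak equivalence in $\comsepc$. Therefore every element of $\kb(\I)$ is a trivial cofibration in $\comsepc$. In any model category, trivial cofibrations are closed under coproducts, pushouts and transfinite composition, so any $\kb(\I)$-cell complex is again a trivial cofibration in $\comsepc$. Applying this to $\tau_\F$ gives that $\tau_\F$ is a weak equivalence in $\comsepc$.

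Combining the two steps via the $3$-for-$2$ axiom shows that $\eta_\F$ is a weak equivalence in $\comsepc$, i.e.\ a $\kb(\I)$-local equivalence in $\comsep$; and since $\comse$ and $\comsep$ share the same class of weak equivalences (hence the same function complexes, and so the same $\kb(\I)$-local equivalences), it is also a $\kb(\I)$-local equivalence in $\comse$. The only subtlety I expect to have to check carefully is the first step, namely the stability of trivial cofibrations in a left Bousfield localization under the cell-complex constructions of the small object argument, which is standard from Hirschhorn \cite{Hirsch-model-loc} but worth spelling out because the localized model structure and the original one share cofibrations: this is precisely what guarantees that $\kb(\I)$-cofibrations are still cofibrations after localization.
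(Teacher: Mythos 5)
Your proof is correct and follows essentially the same route as the paper: both arguments observe that elements of $\kb(\I)$ are cofibrations in $\comsep$ that become weak equivalences in $\comsepc$, hence trivial cofibrations there, so that every $\kb(\I)$-cell complex is a trivial cofibration in the localization, and then conclude via the factorization of $\F \to \iota(\F_1)$ from Assertion $(4)$ of Proposition \ref{pdef-deux-constant}. Your version merely spells out the weak saturation of trivial cofibrations and the two-out-of-three step that the paper leaves implicit.
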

\begin{proof}
Every element $\Gamma(\av) \in \kb(\I)$ becomes a trivial cofibration  in $\comsepc$, since they were cofibration in $\comsep$. In particular every $\kb(\I)$-cell complex is a trivial cofibration in the left Bousfield localization $\comsepc$.  The proposition follows from  Assertion $(4)$ of Proposition \ref{pdef-deux-constant}.
\end{proof}
\subsection{The second localized model category}
We now localize the original model category $\comse$ that doesn't contain a priori the set $\kb(\I)$ among the class of cofibrations. The theorem we give below is also a straightforward application of Smith's theorem for left proper combinatorial model category.
\begin{thm}\label{main-thm-2}
Let $\M$ and $\Ub: \ag \to \M$ be as before.. Then  there exists a combinatorial model structure on $\coms$ which is left proper and which may be described as follows.
\begin{enumerate}
\item A map $\sigma: \F \to \G$ is a weak equivalence if and only if it's a $\kb(\I)$-local equivalence. 
\item A map $\sigma: \F \to \G$ is cofibration if it's a cofibration in $\comse$.
We will denote this model category  by $\comsec$
\item If the adjunction  $\Ub:\com \leftrightarrows \M: \Fb$ is part of a Quillen adjunction,  then the adjunction $$ |-|: \comse \leftrightarrows \com: \iota, $$ descends to a Quillen adjunction:
$$ |-|^{\mathbf{c}}: \comsec \leftrightarrows \com: \iota.$$
In particular the inclusion $\iota: \com \to \comsec$ is again a right Quillen functor.
\item  The identity of $\coms$ determines the universal left Quillen functor
$$\Lb: \comse \to \comsec.$$
\end{enumerate}
This model structure is the left Bousfield localization of $\comse$ with the respect to the set $\kb(\I)$. 
\end{thm}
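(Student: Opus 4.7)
The plan is to run a proof parallel to that of Theorem \ref{main-thm-1}, but starting from $\comse$ rather than $\comsep$, and to handle the Quillen-adjunction descent in point (3) as a separate step. First, I would record from Theorem \ref{easy-model-mua} that $\comse$ is combinatorial and left proper, so that Barwick's version of Smith's theorem on left Bousfield localization at a small set of morphisms applies directly to the pair $(\comse, \kb(\I))$. This immediately produces a combinatorial and left proper model structure $\comsec$ on $\coms$ whose weak equivalences are the $\kb(\I)$-local equivalences and whose cofibrations agree with those of $\comse$. This establishes (1), (2) and the left-properness. Assertion (4) --- that the identity $\Lb: \comse \to \comsec$ is the universal left Quillen functor --- is simply the universal property of the localization.

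For assertion (3), assume $\Ub: \com \leftrightarrows \M: \Fb$ is a Quillen adjunction. Theorem \ref{easy-model-mua}(4) then gives a Quillen adjunction $|-|: \comse \leftrightarrows \com: \iota$. To descend it to $\comsec$, I would invoke Hirschhorn's standard recognition criterion for Quillen adjunctions out of a left Bousfield localization: it suffices to check that the left adjoint $|-|$ sends every element of $\kb(\I)$ to a weak equivalence in $\com$. This is exactly where Remark \ref{rmk-projec-com} does all the work: for $\Gamma(\av) \in \kb(\I)$, the image $|\Gamma(\av)|$ is the identity $\Fb(\Id_V) = \Id_{\Fb(V)}$ in $\com$, which is trivially a weak equivalence. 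The right-Quillen conclusion about $\iota: \com \to \comsec$ is then a tautology of the descended adjunction.

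The only conceptual subtlety worth flagging --- and the point that might be mistaken for an obstacle --- is that, in contrast with the setting of Theorem \ref{main-thm-1}, the set $\kb(\I)$ is \emph{not} part of the generating cofibrations of $\comse$: its elements are cofibrations in $\comsep$ (since they belong to $\I_{\coms}^+$) but not a priori in $\comse$. As a consequence, one cannot proceed as in the proof of Theorem \ref{main-thm-1} by observing that elements of $\kb(\I)$ become trivial cofibrations in the localization and then passing them through the right adjoint. Hirschhorn's criterion is exactly the right tool, since it only requires the images under $|-|$ to be weak equivalences, not trivial cofibrations, and the calculation $|\Gamma(\av)| = \Id_{\Fb(V)}$ makes that hypothesis trivial to verify.
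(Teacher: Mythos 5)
Your proposal is correct and follows essentially the same route as the paper: existence and left properness of $\comsec$ via Smith's/Barwick's localization theorem applied to the combinatorial left proper $\comse$, and assertion (3) via the universal property of the left Bousfield localization together with the observation from Remark \ref{rmk-projec-com} that $|-|$ sends every $\Gamma(\av)\in\kb(\I)$ to an identity of $\com$. Your extra remark that $\kb(\I)$ is not contained in the cofibrations of $\comse$ (so the argument of Theorem \ref{main-thm-1} does not transfer verbatim) is a fair clarification, but it does not change the substance of the argument.
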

\begin{proof}
The existence and characterization of the Bousfield localization follows also from Smith theorem. The first two assertions are clear. Assertion $(3)$ is a consequence of the universal property of the left Bousfield localization. Indeed we have a left Quillen functor $|-|: \comse \to \com$ that takes elements of $\kb(\I)$ to isomorphisms in $\com$ (Remark \ref{rmk-projec-com}).

 Therefore there exists a unique left Quillen functor $|-|^{\mathbf{c}}: \comsec \to \com$ such that we have an equality
$$|-|: \comse \to \com= \comse \xrightarrow{\Lb} \comsec \xrightarrow{|-|^{\mathbf{c}}} \com.$$
\end{proof}

\section{The Quillen equivalence}\label{sec-Quillen-equiv}
Let's now consider a specific situation where $\Ub:\com \leftrightarrows \M: \Fb$ is a Quillen adjunction in which $\Ub$ preserves and reflects the weak equivalences. A typical situation is of a right-induced model structure on $\com$, such as a category of algebras with the free-forgetful adjunction.\\

We start with the following lemma which is useful to establish the Quillen equivalence.
\begin{lem}\label{lem-reflect-equiv}
Let $\M$ be as before.  Assume that $\Ub: \com \to \M$ is a right Quillen functor that preserves and reflects the weak equivalences.   Let $\sigma : \C \to \D$ be a morphism in $\com$ regarded also as morphism in $\coms$. 

Then $\sigma$ is a $\kb(\I)$-local equivalence in $\comse$ (whence in $\comsep$) if and only if it's a weak equivalence in $\com$. 
\end{lem}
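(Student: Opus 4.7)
The forward implication is straightforward: if $\sigma$ is a weak equivalence in $\com$, then since $\Ub$ preserves weak equivalences, $\Ub(\sigma) = s(\iota(\sigma))$ is a weak equivalence in $\M$, so $\iota(\sigma)$ is an easy weak equivalence in $\comse$, hence in particular a $\kb(\I)$-local equivalence (the Bousfield localization only adds weak equivalences).

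For the reverse implication, I first reduce to the case where $\C$ and $\D$ are fibrant in $\com$. Fibrant replacements $\C \xrightarrow{\sim} \hat{\C}$ and $\D \xrightarrow{\sim} \hat{\D}$ induce $\hat{\sigma}: \hat{\C} \to \hat{\D}$ making the obvious square in $\com$ commute. By the forward implication, the vertical maps of the resulting square in $\coms$ are $\kb(\I)$-local equivalences, so two-out-of-three in both $\com$ and in $\comsec$ shows that $\iota(\sigma)$ is a $\kb(\I)$-local equivalence iff $\iota(\hat{\sigma})$ is, and $\sigma$ is a weak equivalence in $\com$ iff $\hat{\sigma}$ is.

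The crucial step is to show $\iota(\C')$ is fibrant in $\comsec$ whenever $\C' \in \com$ is fibrant. Fibrancy in $\comse$ is automatic since $\iota$ is right Quillen by Theorem \ref{easy-model-mua}. For $\kb(\I)$-locality, I apply the derived form of the Quillen adjunction $|{-}|^{\mathbf{c}} \dashv \iota$ of Theorem \ref{main-thm-2}: for each generating cofibration $\alpha: U \to V$ in $\I$, one has $|{-}|\Gamma(\alpha) = |{-}|\Gamma(\Id_V) = \Fb(V)$ and $|{-}|\Gamma(\av) = \Fb(\Id_V) = \Id_{\Fb(V)}$ by Remark \ref{rmk-projec-com}, so the induced map
\[
\mathbf{R}\Map_{\coms}(\Gamma(\Id_V), \iota(\C')) \longrightarrow \mathbf{R}\Map_{\coms}(\Gamma(\alpha), \iota(\C'))
\]
corresponds under the adjunction to the identity of $\mathbf{R}\Map_{\com}(\Fb(V), \C')$, which is trivially a weak equivalence. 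Thus $\iota(\C')$ is $\kb(\I)$-local. Applying this to $\hat{\C}$ and $\hat{\D}$, the standard property of left Bousfield localizations (Hirschhorn \cite{Hirsch-model-loc}) gives that the $\kb(\I)$-local equivalence $\iota(\hat{\sigma})$ between fibrant objects of $\comsec$ is already a weak equivalence in $\comse$, i.e.\ an easy weak equivalence. Hence $\Ub(\hat{\sigma}) = s(\iota(\hat{\sigma}))$ is a weak equivalence in $\M$, and since $\Ub$ reflects weak equivalences, $\hat{\sigma}$ (and so $\sigma$) is a weak equivalence in $\com$.

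The main obstacle is the derived-mapping-space computation: one must rigorously verify that $\mathbf{L}|{-}|(\Gamma(\av))$ represents $\Id_{\Fb(V)}$ in $\Ho(\com)$, which requires compatible cofibrant replacements of $\Gamma(\alpha)$ and $\Gamma(\Id_V)$ in $\coms$. An alternative route uses that $\iota(\C')$ is $\kb(\I)$-injective by Lemma \ref{k-inj-cosegal}(2) and that $\kb(\I)$ maps are cofibrations in $\comsep$, where $\iota(\C')$ is still fibrant (trivial cofibrations of $\comsep$ coincide with those of $\comse$), so strict injectivity against cofibrations combined with fibrancy yields $\kb(\I)$-locality.
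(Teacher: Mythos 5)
Your argument is correct in outline but follows a genuinely different route from the paper's. The paper never passes through fibrant replacement or mapping spaces: it factors $\sigma$ in $\com$ as a trivial cofibration $\sigma_1$ followed by a fibration $\sigma_2$, notes that $\iota(\sigma_2)$ is a fibration in $\comsec$ because $\iota$ is right Quillen there (Theorem \ref{main-thm-2}), deduces by $3$-for-$2$ that $\sigma_2$ is a $\kb(\I)$-local equivalence, hence a trivial fibration in $\comsec$, hence a trivial fibration already in $\comse$ (trivial fibrations are unchanged by left Bousfield localization), hence an easy weak equivalence; reflection of weak equivalences by $\Ub$ then finishes. Your route instead reduces to fibrant objects of $\com$ and invokes the fact that a local equivalence between local objects is an honest weak equivalence. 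Both arguments ultimately rest on the same two inputs (right-Quillenness of $\iota$ into the localization, and the reflection property of $\Ub$); the paper's version is shorter because the observation that trivial fibrations do not change under localization replaces your entire locality analysis.

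The step you must tighten is the crucial claim that $\iota(\C')$ is $\kb(\I)$-local when $\C'$ is fibrant. The clean justification is simply Theorem \ref{main-thm-2}(3): $\iota: \com \to \comsec$ is right Quillen, so it preserves fibrant objects, and the fibrant objects of the localization are exactly the $\kb(\I)$-local objects that are fibrant in $\comse$. Your two substitutes are each problematic. Route 1 (the derived mapping-space computation) requires $\Gamma(\alpha)$ and $\Gamma(\Id_V)$ to be cofibrant in $\comse$ in order to identify $\mathbf{R}\Map_{\coms}(\Gamma(-),\iota(\C'))$ with $\mathbf{R}\Map_{\com}(\Fb(V),\C')$; since the source functor carries cofibrant objects of $\comse$ to cofibrant objects of $\M$, this forces the domains of the maps in $\I$ to be cofibrant, i.e.\ tractability of $\M$ --- precisely the caveat the paper raises in the Note after Theorem \ref{main-thm-quillen-equiv} --- whereas the lemma is meant to hold without it. Route 2 rests on the principle that strict RLP against a cofibration plus fibrancy implies locality; this is false in general (in simplicial sets, a Kan model of $S^1$ has the RLP against $\partial\Delta^1 \to \Delta^1$ but is not local with respect to it, since locality would force the diagonal $S^1 \to S^1 \times S^1$ to be an equivalence). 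Moreover your parenthetical ``trivial cofibrations of $\comsep$ coincide with those of $\comse$'' has the containment backwards: $\comsep$ has more cofibrations and the same weak equivalences, hence at least as many trivial cofibrations, so fibrancy in $\comse$ does not automatically yield fibrancy in $\comsep$. Replace both routes by a direct appeal to Theorem \ref{main-thm-2}(3) and your proof is complete.
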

\begin{proof}
The if part is clear since a weak equivalence in $\com$ is an easy weak equivalence in $\coms$ and therefore it's also a weak equivalence in the Bousfield localization. But the weak equivalences in the Bousfield localization are precisely the $\kb(\I)$-local equivalences.\\

Let's now assume that $\sigma: \C \to \D$ becomes a $\kb(\I)$-local equivalence.\\

Use the axiom of factorization in the model category $\com$ to factor $\sigma$ as trivial cofibration followed by a fibration: 
$$\sigma= \C \xhookrightarrow[\sim]{\sigma_1} \Ea \xtwoheadrightarrow{\sigma_2} \D.$$  

Since we know that inclusion $\iota: \com \to \comse$ is a right Quillen functor when we pass to the left Bousfield localization $\comsec$, it follows that the map $\sigma_2: \Ea \to \D$ is a fibration in $\comsec$. Now  as $\sigma_1$ is a weak equivalence in $\com$, by the if part, it's also a $\kb(\I)$-local equivalence. \\

By the $3$-for-$2$ property of $\kb(\I)$-local equivalences applied to the equality $\sigma= \sigma_2 \circ \sigma_1$, we find that $\sigma_2$ is also a $\kb(\I)$-local equivalence. \\

In the end we see that $\sigma_2$ is simultaneously a fibration in the Bousfield localization $\comsec$ and a $\kb(\I)$-local equivalence, therefore it's a trivial fibration in $\comsec$. But a trivial fibration in this left Bousfield localization is the same as a trivial fibration in the original model structure. This means that $\sigma_2$ is usual trivial fibration, in particular a weak equivalence in $\M$. And since $\Ub$ reflects the weak equivalences, it follows that $\sigma_2$ is a weak equivalence in $\com$. 

Then $\sigma= \sigma_2 \circ \sigma$ is a composite of  weak equivalences in $\com$ and the lemma follows.
\end{proof}

\subsection{The main Theorem}
\begin{thm}\label{main-thm-quillen-equiv}
Let $\M$ be combinatorial and left proper model category. Let $\Ub: \com \to \M$ be a right adjoint between locally presentable categories that is faithful.  Then the following hold. 
\begin{enumerate}
\item The left Quillen equivalence $\comse \to \comsep$ induced by the identity of $\coms$ descends to a left Quillen equivalence between the respective left Bousfield localizations with respect to $\kb(\I)$:
$$\comsec \to \comsepc.$$ 
\item Assume that $\Ub: \com \to \M$ is a right Quillen functor that preserves and reflects the weak equivalences. Then the adjunction
$$ |-|^{\mathbf{c}}: \comsec \leftrightarrows \com: \iota,$$
is a Quillen equivalence. 
\item The diagram $\comsepc \xleftarrow{}\comsec \xrightarrow{|-|^{\mathbf{c}}} \com$ is a zigzag of Quillen equivalences. In particular we have a diagram of equivalences between the homotopy categories.
$$\Ho[\comsepc] \xleftarrow{\simeq} \Ho[\comsec] \xrightarrow{\simeq} \Ho[\com].$$
\end{enumerate}
\end{thm}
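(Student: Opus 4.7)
The plan is to handle the three assertions in order, treating (1) as a formal corollary of the matching weak equivalences in $\comse$ and $\comsep$, (2) as the main substantive step, and (3) as a concatenation. Throughout, the two key inputs are Proposition \ref{prop-eta-kx-loc-equiv} (the unit is already a $\kb(\I)$-local equivalence) and Lemma \ref{lem-reflect-equiv} (the reflection property of $\iota$ on maps in $\com$).

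For assertion (1), I would invoke the observation in the remark opening Section \ref{sec-bousfield}: since $\comse$ and $\comsep$ have the same class of weak equivalences, their simplicial function complexes are equivalent, so they share the same class of $\kb(\I)$-local equivalences. The identity is a left Quillen functor $\comsec \to \comsepc$ because the cofibrations of $\comsec$ form a subclass of those of $\comsepc$ and the weak equivalences coincide; and a Quillen adjunction between two model structures on the same category with the same weak equivalences automatically induces the identity on homotopy categories, hence is a Quillen equivalence.

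For assertion (2), I would apply the standard criterion that an adjunction $L \dashv R$ is a Quillen equivalence iff $R$ reflects weak equivalences between fibrant objects and, for every cofibrant $X$, the derived unit $X \to R((LX)^{\mathrm{fib}})$ is a weak equivalence. Reflection follows at once from Lemma \ref{lem-reflect-equiv}: if $f$ is a map in $\com$ and $\iota(f)$ is a $\kb(\I)$-local equivalence, then the lemma gives that $f$ is a weak equivalence in $\com$, so $\iota$ actually reflects weak equivalences without any fibrancy hypothesis. For the derived unit on a cofibrant $\F \in \comsec$, the strict unit $\F \to \iota(|\F|)$ is already a $\kb(\I)$-local equivalence by Proposition \ref{prop-eta-kx-loc-equiv}. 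Since $\iota$ is a right Quillen functor by Theorem \ref{main-thm-2}(3), and the hypothesis that $\Ub$ preserves weak equivalences ensures that $\iota$ sends weak equivalences in $\com$ to easy weak equivalences in $\coms$, composing with any fibrant replacement $\iota(|\F|) \to \iota(|\F|)^{\mathrm{fib}}$ in $\comsec$ preserves weakness; two-out-of-three delivers the derived unit as a $\kb(\I)$-local equivalence.

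For assertion (3), I would simply splice the Quillen equivalences from (1) and (2) into the zigzag $\comsepc \xleftarrow{} \comsec \xrightarrow{|-|^{\mathbf{c}}} \com$, and observe that any Quillen equivalence descends to an equivalence of homotopy categories. The main obstacle is the derived-unit step in (2): one must check that taking a fibrant replacement of $|\F|$ inside $\com$ and then applying $\iota$ yields a weak equivalence in $\comsec$, even though $\iota$ is not a \emph{left} Quillen functor. This is exactly where the hypothesis ``$\Ub$ preserves weak equivalences'' is decisive, as it guarantees that $\iota$ carries weak equivalences in $\com$ to easy weak equivalences (and hence to $\kb(\I)$-local equivalences); the remainder of the argument is purely formal.
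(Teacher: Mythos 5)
Your argument is correct and is essentially the paper's own proof: both reduce everything to assertion (2) and hinge on the same two inputs, namely Proposition \ref{prop-eta-kx-loc-equiv} (the unit $\F \to \iota(|\F|)$ is a $\kb(\I)$-local equivalence) together with Lemma \ref{lem-reflect-equiv} and two-out-of-three. The only difference is bookkeeping: you use the ``reflection plus derived unit'' criterion for Quillen equivalences, whereas the paper applies the equivalent adjunct-map criterion directly to the triangle $\F \to \iota(|\F|) \to \iota(\C)$, which sidesteps your fibrant-replacement step.
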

\begin{proof}
We only need to prove the second assertion, namely that we have a Quillen equivalence
$$ |-|^{\mathbf{c}}: \comsec \leftrightarrows \com: \iota.$$

For this it suffices to show that if $\F \in \comsec$ is cofibrant and if $\C\in \com$ is fibrant, then a map $\sigma:\F \to \iota(\C)$ is a weak equivalence in $\comsec$ if and only if the adjunct map $\ol{\sigma}:|\F| \to \C$ is a weak equivalence in $\com$.\\

Since the inclusion $\iota: \com \to \comsec$ is fully faithful, we will identity $\C$ with $\iota(\C)$ and $|\F|$ with $\iota(|\F|)$. Let $\eta: \F \to |\F|$ be the unit of the adjunction. Then all three maps fit in a commutative triangle in $\comsec$:
\begin{equation}\label{triang}
\xy
(0,15)*+{\F}="W";
(0,0)*+{|\F| }="X";
(20,0)*+{\C}="Y";
{\ar@{->}^-{\ol{\sigma}}"X";"Y"};
{\ar@{->}^-{\eta}"W";"X"};
{\ar@{->}^-{\sigma}"W";"Y"};
\endxy
\end{equation}

Thanks to Proposition \ref{prop-eta-kx-loc-equiv} we know that $\eta: \F \to |\F|$ is always a $\kb(\I)$-local equivalence. Then by $3$-for-$2$,  we see that $\sigma$ is a $\kb(\I)$-local equivalence if and only if $\ol{\sigma}$ is. Now thanks to Lemma \ref{lem-reflect-equiv} we know that $\ol{\sigma}$ is a $\kb(\I)$-local equivalence if and only if it's an equivalence in $\com$ and the theorem follows.
\end{proof}

\begin{note}
If $\M$ is tractable i.e., the domain of every element in $\I$ is cofibrant, then it's possible to show that the model category $\comsec$ is good enough. Using the Homotopy Extension Lifting Property (HELP), it's possible to show that every fibrant object in $\comsec$ is a $\Ub$-QS-object.
\end{note}

\section{A remark on the left properness for algebras}

Let's consider the original situation of the homotopy theory of algebras in $\M$. It's a well know fact that the category $\oalg(\M)$ is not always left proper (see for example \cite{Rezk_model}, \cite{Muro_hop2}).  A concrete example of the failure of left properness for monoids was communicated to the author by Fernando Muro. The discussion that follows is motivated by a question that came out after some communications with him.\\

We also wanted to understand the recent paper of Batanin-Berger \cite{Batanin_Berger_poly} on the left properness for algebras over a polynomial monad.  We consider the category of algebras over a monad or an operad $\O$, with the projective model structure. We remind the reader that $\M$ is still assumed to be combinatorial and left proper.

\begin{prop}\label{prop-left-prop}
Let $\com=\oalg(\M)$ be the category of algebras  equipped with the free-forgetful Quillen adjunction $\Ub: \oalg(\M) \leftrightarrows \M: \Fb$.  \\

let $D$ be a pushout square in the category $\oalg(\M)$
 \[
 \xy
(0,18)*+{\F}="W";
(0,0)*+{ \G}="X";
(30,0)*+{\Ba}="Y";
(30,18)*+{\Aa}="E";
{\ar@{->}^-{\ol{\sigma}}"X";"Y"};
{\ar@{->}^-{\theta}"W";"X"};
{\ar@{->}^-{\sigma}"W";"E"};
{\ar@{->}^-{\varepsilon}"E";"Y"};
\endxy
\] 

Assume that $\sigma: \F \to \Aa$ is a weak equivalence of algebras. If the image $\Ub(\theta)$ is a cofibration in $\M$, then the map $\ol{\sigma}$ is a weak equivalence of algebras.
\end{prop}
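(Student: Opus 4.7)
The plan is to reduce the claim about pushouts in $\oalg(\M)$ to left properness of $\M$ by routing everything through the comma category $\mua$ and then invoking the $\kb(\I)$-localization machinery of Section \ref{sec-bousfield}. The key observation is that a single pushout formed in $\mua$ simultaneously records the pushout of underlying objects in $\M$ via the source functor $s$ and the pushout of algebras in $\com$ via the augmentation $|-|$, together with the canonical comparison map between the two.

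I would first form the pushout $P$ in $\mua$ of $\iota(\theta)\colon \iota(\F)\to\iota(\G)$ and $\iota(\sigma)\colon \iota(\F)\to\iota(\Aa)$. By Lemma \ref{lem-limit-colimit}, the source functor $s$ preserves this colimit, so $s(P)$ is the pushout in $\M$ of $\Ub(\theta)$ and $\Ub(\sigma)$. Since $\Ub(\theta)$ is a cofibration by hypothesis and $\Ub(\sigma)$ is a weak equivalence in $\M$ (weak equivalences in the projective model structure on $\oalg(\M)$ are detected by $\Ub$), Lemma \ref{lem-pushout-important} yields that the canonical map $\iota(\G)\to P$ has its source component a weak equivalence in $\M$. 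It is therefore an easy weak equivalence in $\comse$, hence a $\kb(\I)$-local equivalence in $\comsec$.

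Next I would compare $P$ with $\iota(\Ba)$. Because $|-|$ is a left adjoint, it preserves the colimit, so $|P|$ is the pushout of $\theta$ and $\sigma$ in $\com$, i.e.\ $|P|=\Ba$. The canonical map $P\to\iota(\Ba)$ produced by the universal property of $P$ then coincides with the unit $\eta_P\colon P\to \iota(|P|)$ of the adjunction $|-|\dashv\iota$, and Proposition \ref{prop-eta-kx-loc-equiv} asserts this unit is a $\kb(\I)$-local equivalence in $\comsec$. Composing the two $\kb(\I)$-local equivalences $\iota(\G)\to P\to \iota(\Ba)$ shows by $3$-for-$2$ that $\iota(\ol{\sigma})$ is itself a $\kb(\I)$-local equivalence. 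Since $\Ub\colon \com\to\M$ preserves and reflects weak equivalences, Lemma \ref{lem-reflect-equiv} applied to the morphism of algebras $\ol{\sigma}\colon\G\to\Ba$ concludes that $\ol{\sigma}$ is a weak equivalence in $\com$, as desired.

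The main obstacle is conceptual rather than computational: one has to recognize that the pushout $P$ in $\mua$ plays a dual role — its source computes the $\M$-pushout and its augmentation computes the $\com$-pushout — and that the localized model structure $\comsec$ is precisely what forces the comparison between these two pushouts to become a weak equivalence. Once this dual role of $P$ is exploited, the conclusion follows by juxtaposing Lemma \ref{lem-pushout-important}, Proposition \ref{prop-eta-kx-loc-equiv}, Lemma \ref{lem-reflect-equiv} and the Quillen equivalence of Theorem \ref{main-thm-quillen-equiv}, with only a diagram chase remaining.
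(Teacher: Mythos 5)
Your proposal is correct and follows essentially the same route as the paper's own proof: the pushout $P$ you form in $\mua$ is exactly the object the paper calls $\Ea$, and both arguments factor $\iota(\ol{\sigma})$ as an easy weak equivalence $\iota(\G)\to P$ (coming from left properness of $\M$ via Lemma \ref{lem-pushout-important}) followed by the unit $P\to\iota(|P|)$ (a $\kb(\I)$-local equivalence by Proposition \ref{prop-eta-kx-loc-equiv}), then conclude with Lemma \ref{lem-reflect-equiv}. The only cosmetic difference is your closing appeal to Theorem \ref{main-thm-quillen-equiv}, which is not needed.
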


\begin{proof}
Let's embed this pushout square in $\coms$ through the inclusion $\iota: \com \hookrightarrow \coms$. This inclusion is a right adjoint, so it doesn't always preserve pushouts. We know from Lemma \ref{lem-limit-colimit} and Lemma \ref{lem-pushout-important}, how we compute colimits in $\coms$, and in particular pushouts.\\

Furthermore, the target functor $|-|: \coms \to \com$ is left adjoint to the previous embedding. And as a left adjoint it preserves any kind of colimits, including pushouts. So if we compute the colimit in $\coms$ and then project it to $\com$ we get the same thing.\\

The pushout object in the category $\coms$ is an object $\Ea$ that corresponds to the canonical morphism $$\Ea= \{[\Ub(\G) \cup^{\Ub(\F)} \Ub(\Aa)] \to \Ub(\Ba) \}.$$ This simply tells us what we do when we compute a pushout: first we take the pushout of underlying objects and then we modify it by imposing equations (taking quotients) to get an algebra. \\

Now since $\M$ is left proper we know that the canonical map between underlying objects is a weak equivalence in $\M$:
$$\Ub(\G) \to  [\Ub(\G) \cup^{\Ub(\F)} \Ub(\Aa)].$$ 

This map together with the map $\ol{\sigma}$ determine a map $\delta: \iota(\G) \to \Ea$ in $\coms$ that is an easy weak equivalence and therefore a $\kb(\I)$-local weak equivalence.\\

On the other hand, as mentioned before the map that underlies the object $\Ea$ defines a tautological morphism 
$$\eta: \Ea \to \iota(\Ba).$$

Thanks to Proposition \ref{prop-eta-kx-loc-equiv} we know that this map is always a $\kb(\I)$-local weak equivalence. But this map fits in a factorization of $\ol{\sigma}$ as:
$$\iota(\sigma)= \eta \circ \delta.$$

We see that $\iota(\sigma)$ is the composite of two $\kb(\I)$-local equivalence, therefore it's a  $\kb(\I)$-local equivalence. But thanks to Lemma \ref{lem-reflect-equiv}, this means that $\ol{\sigma}$ is a weak equivalence of algebras as claimed.
\end{proof}

As a corollary we get:

\begin{cor}\label{left-prop-alg}
If in the projective model structure on $\oalg(\M)$, the image $\Ub(\theta)$ of every cofibration  $\theta$ is a cofibration in $\M$, then $\oalg(\M)$ is also left proper.
\end{cor}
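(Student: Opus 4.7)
The plan is to observe that this corollary is essentially an immediate consequence of Proposition \ref{prop-left-prop}, with the hypothesis on $\Ub$ bridging the gap between cofibrations in $\oalg(\M)$ and cofibrations in $\M$. So the ``work'' has already been done upstream, and here one just needs to verify that the hypothesis of the proposition is satisfied whenever we are in a pushout testing left properness of the projective model structure on algebras.

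Concretely, I would begin by recalling that left properness of $\oalg(\M)$ means: for every pushout square in $\oalg(\M)$ with $\theta$ a cofibration and $\sigma$ a weak equivalence, the pushed-out map $\ol{\sigma}$ is again a weak equivalence of algebras. So I pick an arbitrary such square, identical in shape to the one appearing in the statement of Proposition \ref{prop-left-prop}.

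Next I would apply the standing hypothesis of the corollary: since $\theta$ is a cofibration in the projective model structure on $\oalg(\M)$, its image $\Ub(\theta)$ is a cofibration in $\M$. This places us precisely in the setting of Proposition \ref{prop-left-prop}, whose conclusion asserts that $\ol{\sigma}$ is a weak equivalence of algebras. Since the pushout square was arbitrary, this is exactly the statement of left properness for $\oalg(\M)$.

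I do not expect any genuine obstacle here; the substantive content is already encoded in Proposition \ref{prop-left-prop}, which in turn relied on Lemma \ref{lem-reflect-equiv} (weak equivalences in $\com$ are detected as $\kb(\I)$-local equivalences in $\comsec$) and Proposition \ref{prop-eta-kx-loc-equiv} (the canonical map $\Ea \to \iota(\Ba)$ is a $\kb(\I)$-local equivalence). The only thing to be careful about is to state the implication in the correct direction: the corollary's hypothesis is about all cofibrations in $\oalg(\M)$, which is exactly what is needed to feed into the proposition for an arbitrary pushout datum. The proof is therefore essentially a one-line invocation of Proposition \ref{prop-left-prop}.
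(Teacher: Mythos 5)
Your proposal is correct and matches the paper's intended argument exactly: the paper states the corollary immediately after Proposition \ref{prop-left-prop} with no further proof, precisely because the hypothesis that $\Ub$ sends every projective cofibration of algebras to a cofibration in $\M$ places any pushout square testing left properness directly into the setting of that proposition. Nothing is missing.
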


Following this corollary, it turns out that the left properness for algebras is intimately related to the monad or operad $\O$. By a lemma of Muro (\cite[Lemma 6.8]{Muro_hop2}), if the operad $\O$ is nice enough, every cofibration in $\oalg(\M)$ has an underlying cofibration. In this case we recover Muro's result about the left properness of the category $\oalg(\M)$ (\cite[Cor. 1.14]{Muro_hop2}).\\

In \cite{EKMM_book}, the authors consider a \emph{Cofibration Hypothesis}. Under this hypothesis, it's proved in \cite[Theorem 4.14]{EKMM_book} that the hypothesis of Corollary \ref{left-prop-alg} holds.\\

Finally, the definition of \emph{admissible monad} of Batanin-Berger \cite[Def. 2.9]{Batanin_Berger_poly}, implies the hypothesis of Corollary \ref{left-prop-alg}.

\bibliographystyle{plain}
\bibliography{Bibliography_LP_COSEG}
\end{document}